\newtheorem{thm}{Theorem}[section]
\newtheorem*{thm*}{Theorem}
\newenvironment{customthm}[1]
  {\innercustomthm}
  {\endinnercustomthm}
\newtheorem{prop}[thm]{Proposition}
\newtheorem{lem}[thm]{Lemma}
\newtheorem{conj}[thm]{Conjecture}
\theoremstyle{definition}
\newtheorem{defn}[thm]{Definition}
\newtheorem*{thm1.2}{\textrm{Theorem 1.2}}
\theoremstyle{remark}
\newcommand{\Z}{\mathbb{Z}}
\newcommand{\Q}{\mathbb{Q}}
\renewcommand{\P}{\mathbb{P}}
\newcommand{\Aut}{\operatorname{Aut}}
\newcommand{\val}{\operatorname{val}}
\newcommand{\ch}{\operatorname{ch}}
\newcommand{\overbar}[1]{\mkern 1.5mu\overline{\mkern-1.5mu#1\mkern-1.5mu}\mkern 1.5mu}
\def\C{\mathbb{C}}
\def\M{\mathcal{M}}
\def\P{\mathbf{P}}
\def\bbP{\mathbb{P}}
\def\Q{\mathbb{Q}}
\def\R{\mathbb{R}}
\def\T{\mathbf{T}}
\def\calC{\mathcal{C}}
\def\calM{\mathcal{M}}
\def\calP{\mathcal{P}}
\def\bfx{\boldsymbol{x}}
\def\bfy{\boldsymbol{y}}
\def\scrP{\mathscr{P}}
\newcommand{\Mbar}{\overbar{\mathcal{M}}}
\newcommand{\RRT}{\mathrm{RRT}}
\newcommand{\defeq}{\vcentcolon=}
\let\c@equation\c@thm
\numberwithin{equation}{section}
\title[Cut-and-paste invariants of moduli of relative stable maps to $\bbP^1$]{Moduli of relative stable maps to $\mathbb{P}^1$: cut-and-paste invariants}
\author{Siddarth Kannan}\address{Department of Mathematics, Brown University, Providence, RI 02906}
\email{\url{siddarth_kannan@brown.edu}}
\begin{document}

\begin{abstract}
We study constructible invariants of the moduli space $\Mbar(\bfx)$ of stable maps from genus zero curves to $\bbP^1$, relative to $0$ and $\infty$, with ramification profiles specified by ${\bfx \in \Z^n}$. These spaces are central to the enumerative geometry of $\bbP^1$, and provide a large family of birational models of the Deligne--Mumford--Knudsen moduli space $\Mbar_{0,n}$. For the sequence of vectors $\bfx$ corresponding to maps which are maximally ramified over $0$ and unramified over $\infty$, we prove that a generating function for the topological Euler characteristics of these spaces satisfies a differential equation which allows for its recursive calculation. We also show that the class ${[\Mbar(\bfx)] \in K_0(\mathsf{Var}/\C)}$ of the moduli space in the Grothendieck ring of varieties is constant as $\bfx$ varies within a fixed chamber in the resonance decomposition of $\Z^n$.  We conclude by suggesting several further directions in the study of these spaces, giving conjectures on (1) the asymptotic behavior of the Euler characteristic and (2) a potential chamber structure for the Chern numbers.
\end{abstract}
	
\maketitle\thispagestyle{empty}
\section{Introduction}
Given a vector of nonzero integers $\bfx = (x_1, \ldots, x_n) \in \Z^n$ with $\sum x_i = 0$, one may consider the moduli space of maps
\[f : (\bbP^1, p_1, \ldots, p_n) \to (\bbP^1, 0, \infty) \]
where the $p_i$ are distinct, $f^{-1}(0) = \{p_i \mid x_i > 0\}$, $f^{-1}(\infty) = \{p_i \mid x_i < 0\}$, and $f$ is required to ramify at $p_i$ with index $|x_i|$. An isomorphism between two such maps is a pair of isomorphisms between sources and targets, all fitting into a commutative square.

Put $\calM(\bfx)$ for the moduli space of all such maps. When $n\geq 3$, we have an isomorphism $\calM(\bfx) \cong \calM_{0, n}$, where $\calM_{0,n}$ is the moduli space of smooth $n$-marked curves of genus $0$. The space $\calM(\bfx)$ admits a compactification $\Mbar(\bfx)$ which parameterizes maps where the source is a tree of $\bbP^1$'s and the target is a chain of $\bbP^1$'s; see Section \ref{background} for a precise definition. For each choice of $\bfx$, the space $\Mbar(\bfx)$ is birational to the Deligne--Mumford--Knudsen moduli space $\Mbar_{0, n}$, via the map that remembers only the stabilized source curve. In this paper, we study how topological invariants of $\Mbar(\bfx)$ depend on the input datum $\bfx$.

Our first main theorem gives a recursive algorithm for computing the topological Euler characteristic of the coarse moduli space $\Mbar(\bfx)$ when
\[\bfx = (n, \underbrace{-1, \ldots, -1}_{n \text{ times}}), \]
for $n \geq 2$. Put $\Mbar_n$ for the resulting moduli space, which admits a birational morphism \[\pi_n : \Mbar_n \to \Mbar_{0,n+1}.\] Let $\Mbar_{0,n+1}(k) \subseteq \Mbar_{0,n+1}$
denote the locus of stable pointed curves with exactly $k$ irreducible components. We set \[\Mbar_n(k) \defeq \pi_n^{-1}(\Mbar_{0, n+ 1}(k)) \subseteq \Mbar_n,\] and define the bivariate generating function
\[\Psi(s, t) \defeq \sum_{k \geq 1} \sum_{n \geq 2} \chi(\Mbar_n(k)) \frac{s^k t^n}{k! n!},  \]
where $\chi$ denotes the topological Euler characteristic.
\begin{customthm}{A}\label{Recursion}
The generating function $\Psi$ satisfies the differential equation
\[\frac{\partial \Psi}{\partial s} = (1 +t)\left(\log(1 + t) + \exp\left( -\frac{\Psi}{1 + t} \right)\right) + \Psi(\log(1 + t) + 1) - 2t - 1, \]
with initial condition $\Psi(0, t) = 0$.
\end{customthm}

Theorem \ref{Recursion} allows for the recursive calculation of the numbers $\chi(\Mbar_n(k))$ and hence the numbers $\chi(\Mbar_n)$, since $\chi(\Mbar_n) = \sum_{k = 1}^{n - 1} \chi(\Mbar_n(k))$. Table \ref{EulerChars} compares the Euler characteristics of the birational spaces $\Mbar_n$ and $\Mbar_{0, n + 1}$ for $n$ up to 19. In both cases, the Euler characteristic is equal to the dimension of the rational cohomology ring.

Our second main theorem states that when viewed as a function of $\bfx$, the class of $\Mbar(\bfx)$ in the Grothendieck ring of varieties is constant on the chambers of a well-studied decomposition of $\Z^n$. Set $[n] \defeq \{1, \ldots, n\}$, and for a subset $I \subseteq [n]$, put
\[W_I \defeq  \left\{(x_1, \ldots, x_n) \mid \sum_{i \in I}x_i = 0\right\} \subset \R^n, \]
to define 
\begin{equation}\label{admissibledata}
    A_n \defeq W_{[n]} \smallsetminus \left(\bigcup_{\substack{{I \subset [n]} \\ I \neq \varnothing, [n]}} W_I \right).
\end{equation}
Throughout this paper, we will assume that our ramification datum $\bfx$ is selected from $A_n \cap \Z^n$. The walls $W_I$ for $I \subsetneq [n]$ define a hyperplane arrangement in $\R^n$, known as the \textit{\textbf{resonance arrangement}}. They induce a decomposition of $A_n$ into connected components which are called \textit{\textbf{resonance chambers}}. These terms were coined by Shadrin--Shapiro--Vainshtein ~\cite{SSV} in their study of Hurwitz numbers. We prove that the class $[\Mbar(\bfx)]$ is constant on these chambers.
\begin{customthm}{B}\label{LocallyConstant}
Let $[\Mbar(\bfx)] \in K_0(\mathsf{Var}/\C)$ denote the class of $\Mbar(\bfx)$ in the Grothendieck ring of varieties. Then, as a function of $\bfx \in A_n \cap \Z^n$, the class $[\Mbar(\bfx)]$ is constant on resonance chambers.
\end{customthm}
Theorem \ref{LocallyConstant} implies that Theorem \ref{Recursion} extends to the the calculation of $\chi(\Mbar(\bfx))$ for any $\bfx$ chosen from the \textbf{\textit{central chamber}}, which we define to be the one containing $(n, -1, \ldots, -1)$.

\begin{table}
\begin{tabular}{|l|l|l|}
\hline
\rule{0pt}{2.5ex}$n$  & $\chi(\Mbar_n)$ & $\chi(\Mbar_{0,n+1})$\\ \hline
2  & 1 & 1                                                      \\ \hline
3  & 2 & 2                                                      \\ \hline
4  & 10 & 7                                                     \\ \hline
5  & 84 & 34                                                    \\ \hline
6  & 1108 & 213                                                 \\ \hline
7  & 20824 & 1630                                               \\ \hline
8  & 530528 & 14747                                             \\ \hline
9  & 17578464 & 153946                                          \\ \hline
10 & 734772384 & 1821473                                        \\ \hline
11 & 37814132256 & 24087590                                     \\ \hline
12 & 2349344349504 & 352080111                                  \\ \hline
13 & 173367352211520 & 5636451794                               \\ \hline
14 & 14989230432337536 & 98081813581                            \\ \hline
15 & 1500796146336385152 & 1843315388078                        \\ \hline
16 & 172277450643084049920 & 37209072076483                     \\ \hline
17 & 22474724472542045216256 & 802906142007946                  \\ \hline
18 & 3306538057482623252067840 & 18443166021077145              \\ \hline
19 & 544879611875655894561850368 & 449326835001457846           \\ \hline
\end{tabular}
\caption{Euler characteristics of $\Mbar_n$, computed using Theorem \ref{Recursion}, compared with the corresponding values for $\Mbar_{0,n+1}$.}
\label{EulerChars}
\end{table}

\subsection{Related work}
The spaces $\Mbar(\bfx)$ and higher genus counterparts have attracted much interest over the past few decades, mainly for their role in enumerative geometry. Analogues of this moduli space where the source has arbitrary genus were studied by Faber--Pandharipande ~\cite{FaberPandharipande}, who showed that Gromov--Witten classes on these spaces have tautological pushforwards to the moduli spaces of curves. These spaces were also studied in the context of relative virtual localization by Graber--Vakil ~\cite{GraberVakil}, also with applications to tautological rings. Janda \textit{et al.} ~\cite{JPPZ} proved explicit formulas for the pushforwards of the virtual fundamental classes of $\Mbar(\bfx)$ and higher genus analogues in the tautological rings. Ranganathan ~\cite{Rang17} studied the generalization of $\Mbar(\bfx)$ where the target is replaced by an arbitrary toric variety, relative to its toric boundary.

The spaces $\Mbar(\bfx)$ are also central to Hurwitz theory, which is concerned with the enumeration of branched covers of $\bbP^1$ with fixed ramification profiles. Certain Hurwitz numbers can be expressed as integrals over spaces of relative stable maps, as is noted by Goulden--Jackson--Vakil ~\cite{GJV}, who also showed that as a function of the ramification datum $\bfx$, the double Hurwitz numbers are piecewise polynomial. Shadrin--Shapiro--Vainshtein ~\cite{SSV} proved that in genus zero, the chambers of polynomiality are given by the resonance chambers. This result has been extended to higher genus by Cavalieri--Johnson--Markwig ~\cite{ChamberStructure}.

The space $\Mbar(\bfx)$ has also been studied in the context of tropical geometry by Cavalieri--Markwig--Ranganathan ~\cite{CMR}. They showed how to construct $\Mbar(\bfx)$ as a tropical compactification of $\calM(\bfx)$ inside a toric variety, and consequently obtained an equality of classical and tropical Gromov--Witten invariants of $\bbP^1$. Many of the combinatorial techniques in this paper are inspired by their work, as well as work of Cavalieri--Johnson--Markwig ~\cite{CJM} on tropical Hurwitz theory.

Our proof of Theorem \ref{Recursion} is in the spirit of enumerative combinatorics, as the generating function $\Psi$ can be expressed as a sum over all rooted stable trees. The strategy of proof is inspired by previous work of Manin ~\cite{Manin} on the generating function for Euler characteristics of the moduli spaces $\Mbar_{0, n}$. Operadic versions of this technique have been used to calculate topological invariants of $\Mbar_{0,n}$ by Getzler ~\cite{Getzler} and Kontsevich spaces of stable maps by Getzler--Pandharipande ~\cite{GetzlerPandharipande}.

\subsection{Outline of the paper}
In Section \ref{background} we give a precise definition of the spaces $\Mbar(\bfx)$, recall their stratification by combinatorial type, and prove Theorem \ref{LocallyConstant}. In Section \ref{TreeSums}, we prove Theorem \ref{Recursion} by expressing the relevant generating function as a sum over trees. In Section \ref{NewHorizons}, we close the paper with a discussion of potential next steps in the study of the spaces $\Mbar(\bfx)$, and make two conjectures: Conjecture \ref{complexity} concerns the asymptotic behavior of the Euler characteristic in the central chamber, while Conjecture \ref{ChernConjecture} posits a chamber structure for the Chern numbers of $\Mbar(\bfx)$.

\subsection*{Acknowledgments}
I am grateful to Dhruv Ranganathan for suggesting this line of research, many helpful discussions, and careful feedback on an early draft of this article. I also thank Navid Nabijou for useful conversations. This paper was completed during a visit to the DPMMS at the University of Cambridge, and I thank them for providing ideal working conditions. This work was supported by an NSF Graduate Research Fellowship.

\section{The moduli space \texorpdfstring{$\Mbar(\bfx)$}{Mx} and its stratification}\label{background}

Fix a vector of integers $\bfx = (x_1, \ldots, x_n) \in A_n \cap \Z^n$, where $A_n$ is as defined in (\ref{admissibledata}). Recall that the \textit{\textbf{dual tree}} of a nodal curve $X$ of arithmetic genus zero is formed by giving a vertex for each irreducible component of $X$, and connecting two such vertices by an edge when the corresponding components are joined by a node in $X$. Labelled vertices of valence one are then added to indicate the distribution of the marked points among the irreducible components of $X$.
\begin{defn}
A \textit{\textbf{genus zero rubber map}} is a morphism \[f:(X, p_1, \ldots, p_n) \to (Y, 0_Y, \infty_Y),\]
where 
\begin{itemize}
    \item $X$ is a nodal curve of arithmetic genus $0$ with $p_i \in X$ smooth;
    \item $Y$ is a nodal curve of arithmetic genus $0$ whose dual tree is a path;
    \item the points $0_Y, \infty_Y \in Y$ are smooth, and lie on the two extreme components of $Y$;
    \item the preimage of each node of $Y$ is a union of nodes of $X$;
    \item if we lift $f$ to a morphism between the normalizations of source and target, the ramification indices at the two preimages of each node of $X$ agree;
    \item $f^{-1}(0) = \{p_i \mid x_i > 0 \}$ and $f^{-1}(\infty) = \{ p_i \mid x_i < 0\}$, and $f$ ramifies at $p_i$ with ramification index $|x_i|$.
\end{itemize}
\end{defn}
An isomorphism between two such maps
\[f:(X, p_1, \ldots, p_n) \to (Y, 0_Y, \infty_Y) \mbox{ and } f':(X', p_1', \ldots, p_n') \to (Y', 0_{Y'}, \infty_{Y'}) \]
is a pair of isomorphisms $\varphi$ and $\psi$ which fit into a commuting square:
\begin{equation}\label{Square}
\begin{tikzcd}
&(X, p_1, \ldots, p_n)\arrow[r, "f"] \arrow[d, "\varphi"] &(Y, 0_{Y}, \infty_{Y})\arrow[d, "\psi"]\\
&(X', p_1', \ldots, p_n')\arrow[r, "f'"] &(Y', 0_{Y'}, \infty_{Y'})
\end{tikzcd}.
\end{equation}
The term ``rubber" in the above definition refers to the fact that the target is unparameterized. For example, when $Y = \bbP^1$, two maps that differ by the $\C^*$-action are considered equivalent. A genus zero rubber map is \textit{\textbf{stable}} if it has finitely many automorphisms. Given a genus zero rubber map $f:(X, p_1, \ldots, p_n) \to (Y, 0_Y, \infty_Y)$, we say an irreducible component $X'$ of $X$ is a \textit{\textbf{trivial bubble}} if $X'$ is mapped to its image with full ramification over $0$ and $\infty$, and no ramification elsewhere, i.e. the map is of the form $[z_0:z_1] \mapsto [z_0^d: z_1^d]$ when restricted to $X'$. The following proposition gives a concrete interpretation of stability in terms of trivial bubbles.
\begin{prop}
A genus zero rubber map $f:(X, p_1, \ldots, p_n) \to (Y, 0_Y, \infty_Y)$ is stable if and only if
\begin{enumerate}[(1)]
    \item each component of $X$ which is contracted by $f$ has at least three special points, where a special point is a marked point or a node, and
    \item for each component $Y'$ of $Y$, there is a component of $X$ mapping to $Y'$ which is not a trivial bubble.
\end{enumerate}
\end{prop}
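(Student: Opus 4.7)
The plan is to prove both directions by analyzing one-parameter families of automorphisms, using the fact that stability is equivalent to finiteness of the automorphism group. Any continuous family $(\varphi_t, \psi_t)$ with $(\varphi_0, \psi_0) = \mathrm{id}$ may, near $t = 0$, be assumed to preserve each irreducible component of $X$ and of $Y$; I will reason component-by-component, writing $f|_{X'}: X' \to Y'$ for the restriction of $f$ to a component $X' \subseteq X$ mapping to a component $Y' \subseteq Y$.

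For the forward direction I argue by contrapositive. If some contracted component $X'$ carries at most two special points, then the subgroup of $\Aut(\bbP^1)$ fixing these points is positive-dimensional, and acting by this subgroup on $X'$ while taking the identity on all other components of $X$ and on $Y$ produces a positive-dimensional family of automorphisms. If instead condition (2) fails, fix a component $Y'$ above which every source component $X'_\alpha$ is a trivial bubble $[z_0:z_1] \mapsto [z_0^{d_\alpha}:z_1^{d_\alpha}]$. Define $\psi_t$ to rescale $Y'$ by $e^t$ (fixing the two special points of $Y'$) and to be the identity on the remaining components of $Y$; on each $X'_\alpha$ define $\varphi_t$ to rescale by $e^{t/d_\alpha}$ (fixing the two preimages of the special points of $Y'$), and set $\varphi_t$ to be the identity elsewhere on $X$. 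Commutativity of (\ref{Square}) holds by construction, producing a positive-dimensional family of automorphisms.

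For the reverse direction, assume (1) and (2) and let $(\varphi_t, \psi_t)$ be a one-parameter family of automorphisms. For each component $Y'$ of $Y$, use (2) to choose a non-trivial bubble $f|_{X'}: X' \to Y'$. The key observation is that such a bubble must have at least two preimages over one of the two distinguished points $0'$ and $\infty'$ of $Y'$: indeed, Riemann--Hurwitz applied to a degree-$d$ cover $\bbP^1 \to \bbP^1$ with full ramification over $0'$ and $\infty'$ forces no further ramification, so failing to be of the form $z \mapsto z^d$ requires multiple preimages over $0'$ or $\infty'$. By the definition of a rubber map, such preimages are either nodes of $X$ or marked points $p_i$, hence are fixed by $\varphi_t$; together with the preimages of the other distinguished point, this gives at least three fixed points of $\varphi_t|_{X'}$ on $\bbP^1$, forcing $\varphi_t|_{X'} = \mathrm{id}$. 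Commutativity of (\ref{Square}) then yields $\psi_t|_{Y'} = \mathrm{id}$, and since $Y'$ was arbitrary, $\psi_t = \mathrm{id}$. It remains to show $\varphi_t = \mathrm{id}$: on contracted components this follows from (1), and on non-contracted components $X'' \to Y''$ the group of automorphisms of $X''$ over $Y''$ is the (finite) deck group of the cover, so a continuous family based at the identity is trivial.

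The main obstacle I anticipate is the careful case analysis of what ``non-trivial bubble'' means in practice: the Riemann--Hurwitz computation ruling out fully-ramified bubbles with extra ramification, the identification of preimages of $0_Y$, of $\infty_Y$, and of the nodes of $Y$ as marked points or nodes of $X$, and the reduction to one-parameter families preserving each component --- all of these must be handled cleanly from the rubber map definition before the fixed-point counting argument can be applied.
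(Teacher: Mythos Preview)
The paper states this proposition without proof; it is recorded as a standard characterization of stability for rubber maps and the text moves on immediately. So there is no argument in the paper to compare against, and your proposal is in fact the expected proof: reduce finiteness of the automorphism group to triviality of its identity component, and analyze one-parameter families component by component. The Riemann--Hurwitz step is exactly the right way to see that a non-trivial bubble must have at least two preimages over one of the two distinguished points of its target component, hence at least three special points, pinning down $\varphi_t$ there and then $\psi_t$ on that target component.

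One place deserves an extra sentence. In the forward direction, when condition~(2) fails and you rescale each trivial bubble $X'_\alpha$ by $e^{t/d_\alpha}$ while setting $\varphi_t$ to be the identity elsewhere, you are implicitly assuming that every special point of $X'_\alpha$ lies over $0'$ or $\infty'$, so that your rescaling fixes all nodes and marked points of $X$. In the setting of this paper that is automatic---the combinatorial-type description forces every node of $X$ to sit over a node of $Y$---but in general a contracted tree could be attached to $X'_\alpha$ at an interior point, and then your $\varphi_t$ as written would move that node and fail to define an automorphism of $X$. The statement is still correct in that situation (such an attachment already gives the bubble a third special point and obstructs the $\C^*$-lift, so one should regard it as witnessing condition~(2) rather than violating it), but your construction should either rule out this case or absorb it into the interpretation of ``component mapping to $Y'$.''
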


The described moduli problem is representable by a smooth Deligne-Mumford stack. Unless otherwise stated, we will work with the coarse moduli scheme, which we denote by $\Mbar(\bfx)$. This is a projective scheme with finite quotient singularities, and the inclusion $\calM(\bfx) \hookrightarrow \Mbar(\bfx)$ is a toroidal embedding. The dual complex of the boundary is a subdivision of the moduli space $\Delta_{0,n}$ of $n$-marked genus zero tropical curves, and is studied in ~\cite{CMR}.
\subsection{Combinatorial types of rubber stable maps}
Each map in $\Mbar(\bfx)$ can be discretized into a morphism of decorated trees. We will now axiomatize the maps of trees which arise in this way, and consequently obtain a stratification of $\Mbar(\bfx)$, similar to the dual graph stratifications of moduli spaces of curves. Given a finite tree $T$, we write $V(T)$ and $E(T)$ for the set of vertices and edges of $T$, respectively. A vertex of valence one in $T$ is called a \textit{\textbf{leaf}} of $T$; the set of leaves is denoted by $L(T)$. An edge of $T$ which is connected to a leaf will be called an \textit{\textbf{end}}. Vertices which are not leaves are called \textit{\textbf{internal vertices}}, and we put $I(T)$ for the set of all internal vertices.
\begin{defn}
Let $S$ be a finite set. An \textit{\textbf{$S$-marked tree}} is a pair $\T = (T, m)$ where $T$ is a finite connected tree and $m: S \to L(T)$ is a bijection. An \textit{\textbf{isomorphism}} of $S$-marked trees $\phi: \T \to \T'$ is an isomorphism of the underlying trees that respects the marking functions. An $S$-marked tree is said to be \textit{\textbf{stable}} if it has no vertices of valence two.
\end{defn}
For an integer $n > 0$, we say $\T$ is an \textbf{\textit{$n$-marked tree}} if the marking set is $S = \{1, \ldots, n\}$. We put $\Gamma_{0,n}$ for the set of isomorphism classes of all stable $n$-marked trees.
\begin{defn}
An \textit{\textbf{$n$-marked combinatorial rubber map}} is a map $f: \T \to \P$, where:
\begin{itemize}
    \item $\T$ is an $n$-marked tree
    \item $\P$ is a $\{0, \infty\}$-marked tree whose underlying tree is a path, and
    \item $f$ maps leaves to leaves, internal vertices to internal vertices, and edges to edges.
\end{itemize}
An isomorphism between two such maps $f: \T \to \P$ and $f': \T' \to \P'$ is a pair of isomorphisms fitting into a commuting square as in (\ref{Square}).
\end{defn}
We say an $n$-marked combinatorial rubber map $f: \T \to \P$ is \textit{\textbf{stable}} if each internal vertex in $\P$ has at least one preimage in $\T$ which has valence greater than two. Note that a combinatorial rubber map may be stable even though the source tree is not. However, the source of a combinatorial rubber map always has a \textit{\textbf{stabilization}} which lies in $\Gamma_{0,n}$, obtained by smoothing bivalent vertices. 

Given a relative stable map $f: (X, p_1, \ldots, p_n) \to (Y, 0_Y, \infty_Y)$, we put $\Gamma(X)$ for the $n$-marked dual tree of $X$, and $\Gamma(Y)$ for the $\{0, \infty\}$-marked dual tree of $Y$. The morphism $f: (X, p_1, \ldots, p_n) \to (Y, 0_Y, \infty_Y)$ induces an $n$-marked combinatorial rubber stable map \[\pi: \Gamma(X) \to \Gamma(Y),\] since nodes of $X$ map to nodes of $Y$ and marked points map to marked points. We refer to the combinatorial rubber stable map $\pi: \Gamma(X) \to \Gamma(Y)$ as the \textit{\textbf{combinatorial type}} of the stable map $f$. See Figure \ref{ComboTypeExampleFig}.

\begin{figure}[h]
    \centering
    \includegraphics[scale=1.25]{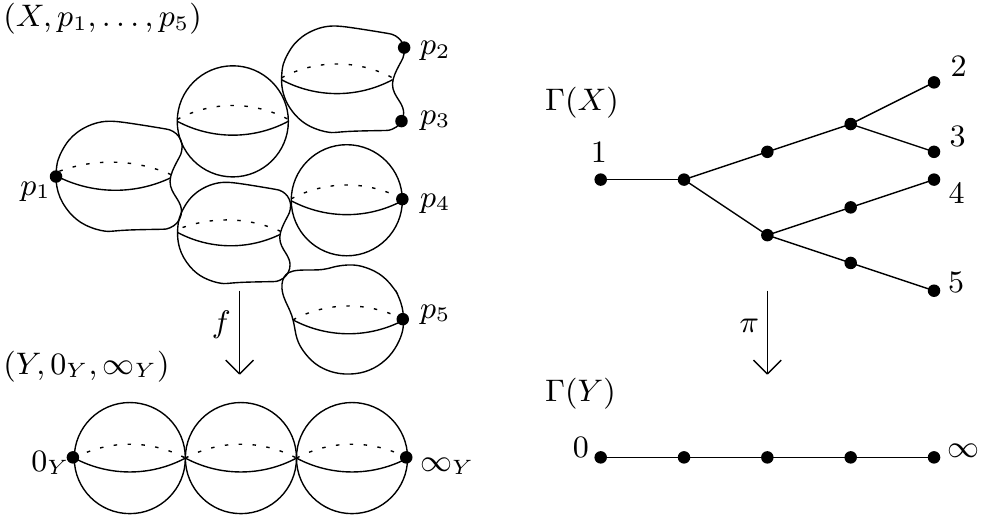}
    \caption{A rubber stable map $f$ and its combinatorial type.}
    \label{ComboTypeExampleFig}
\end{figure}

\subsection{Combinatorial types, partial orderings, and weights}

Given a combinatorial type $\pi: \T \to \P$, we can put a total ordering on $V(\P) \coprod E(\P)$ using the path structure of $\P$. We agree that $0 \in L(\P)$ is the minimal element in this ordering, and for $\alpha, \beta \in V(\P) \coprod E(\P)$, we have $\alpha < \beta$ if and only if $\alpha$ is nearer $0$ than $\beta$ in $\P$. In this case we will say that $\alpha$ is to the left of $\beta$, and we call this the \textit{\textbf{left-to-right}} ordering of $\P$. Using the map $\pi$, we can pull back the left-to-right ordering to get a \textit{partial} ordering of $V(\T) \coprod E(\T)$, where two elements are incomparable if and only if they are both in the same fiber of $\pi$. Given a vertex $v \in V(\T)$, we define sets \[\mathrm{Left}(v), \mathrm{Right}(v) \subset E(\T)\] as follows: the elements of $\mathrm{Left}(v)$, respectively $\mathrm{Right}(v)$, are those edges $e$ containing $v$ such that $\pi(e)$ is strictly less, respectively strictly greater, than $\pi(v)$ in the total ordering on $V(\P) \coprod E(\P)$. 

As a stable map \[f: (X, p_1, \ldots, p_n) \to (Y, 0_Y, \infty_Y)\] has a well-defined ramification index at each node of $X$, there is a natural weight function
\begin{equation}\label{WeightFunction}
w : E(\Gamma(X)) \to \Z_{>0}
\end{equation}
assigning to an end the ramification index at the corresponding marked point, and assigning to an internal edge the ramification index at the corresponding node. By basic considerations about morphisms of smooth curves, we must have
\begin{equation}\label{Balancing}
    \sum_{e \in \mathrm{Left(v)}} w(e) = \sum_{e \in \mathrm{Right(v)}} w(e)
\end{equation}
for any $v \in V(\T)$, assuming that $\pi: \T \to \P$ is a combinatorial type of a relative stable map, with the weight function given by ramification indices. If $\T = (T, m)$ is the source of a combinatorial rubber map with weight function $w$, and $T_1 \subset T$ is a subtree, we put
\[w(T_1) \defeq \sum_{i \in L(T_1) \cap L(T)} x_i.\]

Note that while the weight function for edges (\ref{WeightFunction}) is always positive, the weight function for subtrees can be either positive or negative. When $T_1$ is a connected component of the tree obtained from $T$ by deleting a single edge, the following result expresses $w(T_1)$ in terms of the contribution of edges connected to a particular vertex.

\begin{lem}\label{LocalCalculation}
Suppose $\pi: \T \to \P$ is the combinatorial type of a relative stable map, with weight function $w: E(\T) \to \Z_{>0}$ given by the ramification indices. Let $v \in I(\T)$ be an internal vertex, and let $e' \in E(\T)$ be an internal edge containing $v$. Let $T_1$ be the connected component of $\T \smallsetminus \{e'\}$ which contains $v$. Then we have
\[w(T_1) = \sum_{e \in \mathrm{Left}(v) \cap E(T_1)} w(e) -  \sum_{e \in \mathrm{Right}(v) \cap E(T_1)} w(e). \]
\end{lem}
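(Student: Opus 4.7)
The plan is to sum the local balancing relation \eqref{Balancing} over the internal vertices of $T_1$ and evaluate the resulting double sum in two different ways. The key observation is that for any internal vertex $u \in I(T_1)$ other than $v$, the edges of $T_1$ at $u$ coincide with the edges of $\T$ at $u$, because the only edge deleted in passing from $\T$ to $T_1$ is $e'$, which is incident to $v$. Balancing at such a $u$ therefore simplifies to $\sum_{e \in \mathrm{Left}(u) \cap E(T_1)} w(e) = \sum_{e \in \mathrm{Right}(u) \cap E(T_1)} w(e)$.

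Assuming first that $v \in I(T_1)$, I would set
\[ S \defeq \sum_{u \in I(T_1)} \Bigl( \sum_{e \in \mathrm{Left}(u) \cap E(T_1)} w(e) - \sum_{e \in \mathrm{Right}(u) \cap E(T_1)} w(e) \Bigr), \]
and note that by the preceding observation only the $u = v$ term survives, so $S$ equals the right-hand side of the lemma. I would then compute $S$ in a second way by swapping the order of summation and grouping by edges of $T_1$. Each internal edge of $T_1$ joins two internal vertices and appears once with a plus sign and once with a minus sign, contributing zero. A pendant edge from $u \in I(T_1)$ to a leaf $\ell \in L(T)$ contributes $+x_\ell$ at $u$: when $x_\ell > 0$ we have $\pi(\ell) = 0$, forcing $e \in \mathrm{Left}(u)$ with $w(e) = x_\ell$; when $x_\ell < 0$ we have $\pi(\ell) = \infty$, forcing $e \in \mathrm{Right}(u)$ with $w(e) = -x_\ell$. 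Summing over pendant edges gives $S = \sum_{\ell \in L(T_1) \cap L(T)} x_\ell = w(T_1)$, and equating the two expressions for $S$ yields the identity.

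The remaining case is when $v$ has valence two in $\T$, so that $v$ becomes a leaf of $T_1$. Here $e'$ and its partner $e''$ at $v$ have equal weight by balancing at $v$ in $\T$ and lie on opposite sides of $v$, and the disjoint union of $\mathrm{Left}(v) \cap E(T_1)$ and $\mathrm{Right}(v) \cap E(T_1)$ contains only $e''$. The same double sum $S$ now vanishes identically (since $v \notin I(T_1)$), but the edge-by-edge computation picks up an extra $\pm w(e'')$ from the pendant edge $e''$, and solving gives exactly the right-hand side of the lemma. The main obstacle throughout is sign bookkeeping, particularly the translation between the left/right decomposition at an internal vertex and the sign of $x_\ell$ at an adjacent leaf; once this is organized, the identity is a telescoping along $T_1$.
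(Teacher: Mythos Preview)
Your argument is correct and takes a genuinely different route from the paper. The paper telescopes \emph{along the target path} $\P$: labeling the edges of $\P$ from left to right as $f_0,\ldots,f_{r+1}$ and placing $v$ over the vertex $u_k$, it shows that for every $i\neq k$ the total weight carried by $\pi^{-1}(f_i)\cap E(T_1)$ equals that of $\pi^{-1}(f_{i+1})\cap E(T_1)$, and then collapses from both ends toward level $k$. You instead telescope \emph{over the subtree} $T_1$: sum the signed balancing expression over the internal vertices of $T_1$ and observe that internal edges cancel pairwise while pendant edges to leaves of $\T$ each contribute $x_\ell$. Both proofs hinge on the same observation---that for $u\neq v$ in $T_1$ every edge of $\T$ at $u$ already lies in $T_1$---but your packaging is shorter and never unpacks the level structure of $\P$. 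The paper's version has the mild expository advantage of making the ``flow across a cut of $\P$'' picture explicit.

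One small gap: your valence-two case assumes that the other endpoint $u$ of $e''$ lies in $I(T_1)$, so that the edge-by-edge count records the extra $\pm w(e'')$ at $u$. If $u$ is itself a leaf of $\T$ (so $T_1$ consists of the single edge $e''$), then $I(T_1)=\varnothing$ and both computations of $S$ are vacuous; one then checks the identity by hand (both sides equal $x_u$). A cleaner fix, which also eliminates the case split entirely, is to sum over \emph{all} of $V(T_1)$ rather than $I(T_1)$: every edge of $T_1$ then cancels, each leaf $\ell\in L(T_1)\cap L(\T)$ contributes $-x_\ell$, and the $v$-term contributes the right-hand side, so $0 = \text{RHS} - w(T_1)$ in one stroke.
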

\begin{proof}
Suppose that $\P$ has $r$ internal edges, labelled from left-to-right as $f_1, \ldots, f_r$, with $0$-end labelled as $f_0$ and $\infty$-end labelled as $f_{r + 1}$. Then there exists some $k$ such that $v \in \pi^{-1}(u_k)$, where $u_k$ is the vertex where $f_k$ and $f_{k + 1}$ meet. By definition, we have
\[w(T_1) = \sum_{e \in \pi^{-1}(f_0) \cap E(T_1)} w(e) - \sum_{e \in \pi^{-1}(f_{r + 1}) \cap E(T_1)}w(e). \]
Now we claim that for any $i$ with $0 \leq i \leq r$ and $i \neq k$, we have
\begin{equation}\label{CollapsingToCenter}
\sum_{e \in \pi^{-1}(f_i) \cap E(T_1)} w(e) = \sum_{e \in \pi^{-1}(f_{i + 1}) \cap E(T_1)}w(e).
\end{equation}
Indeed, for any vertex $s \in V(T_1)$ with $s \neq v$, we must have $\mathrm{Left}(s), \mathrm{Right}(s) \subseteq E(T_1)$: since $s$ and $v$ are connected by a path in $T_1$, it must be that any edge containing $s$ is in the same connected component as $v$. Therefore, if we let $u_i$ be the vertex where $f_i$ and $f_{i + 1}$ meet in $\P$, we have
\[\sum_{e \in \pi^{-1}(f_i) \cap E(T_1)} w(e) = \sum_{s \in \pi^{-1}(u_i) \cap V(T_1)} \left(\sum_{e \in \mathrm{Left}(s)} w(e)\right) \]
and 
\[\sum_{e \in \pi^{-1}(f_{i + 1}) \cap E(T_1)} w(e) = \sum_{s \in \pi^{-1}(u_i) \cap V(T_1)} \left(\sum_{e \in \mathrm{Right}(s)} w(e)\right), \]
so (\ref{CollapsingToCenter}) follows from (\ref{Balancing}). Applying (\ref{CollapsingToCenter}) for $i = 0, \ldots, k-1$, we see that
\[\sum_{e \in \pi^{-1}(f_0) \cap E(T_1)} w(e) = \sum_{e \in \pi^{-1}(f_k) \cap E(T_1)} w(e). \]
Similarly, applying (\ref{CollapsingToCenter}) for $i = k+1, \ldots, r$, we get
\[\sum_{e \in \pi^{-1}(f_{r + 1}) \cap E(T_1)}w(e) = \sum_{e \in \pi^{-1}(f_{k + 1}) \cap E(T_1)}w(e), \]
so that
\[w(T_1) = \sum_{e \in \pi^{-1}(f_k) \cap E(T_1)} w(e) - \sum_{e \in \pi^{-1}(f_{k + 1}) \cap E(T_1)}w(e). \]
If we set $\pi^{-1}(u_k) \cap V(T_1) = \{v, j_1, \ldots, j_\ell\}$, then we have $\mathrm{Left}(j_i), \mathrm{Right}(j_i) \subset E(T_1)$ for all $i$. Thus, using the above expression and (\ref{Balancing}), we have
\begin{align*}
    w(T_1) &= \sum_{e \in \mathrm{Left}(v) \cap E(T_1)} w(e) - \sum_{e \in \mathrm{Right}(v) \cap E(T_1)} w(e) +  \sum_{i = 1}^{\ell}\left(\sum_{e \in \mathrm{Left}(j_i)}  w(e)- \sum_{e \in \mathrm{Right}(j_i)}w(e)\right) \\&= \sum_{e \in \mathrm{Left}(v) \cap E(T_1)} w(e) - \sum_{e \in \mathrm{Right}(v) \cap E(T_1)} w(e),
\end{align*}
as we wanted to show.
\end{proof}
We now work towards the enumeration of boundary strata of $\Mbar(\bfx)$ in terms of pairs consisting of a tree $\T \in \Gamma_{0,n}$ together with an ordered partition of $V(\T)$, satisfying certain technical properties. These partitions will be defined in terms of the following directed tree structure on $\T$.
\begin{defn}
Given $\bfx \in A_n$, each tree $\T \in \Gamma_{0,n}$ can be made into a directed tree as described below.
\begin{enumerate}
    \item The edge of $\T$ incident to leaf $i$ is directed inwards if $x_i > 0$, and outwards if $x_i < 0$.
    \item Upon deletion, each internal edge $e$ of $\T = (T, m)$ disconnects $T$ into two components $T_1$ and $T_2$. As $\bfx\in A_n$, we must have $w(T_1) + w(T_2) = 0$, and $w(T_1), w(T_2) \neq 0$, hence exactly one of the values $w(T_1), w(T_2)$ is negative, and the other is positive. The edge $e$ is oriented away from the positive component, and towards the negative component.
\end{enumerate}
This directed tree structure on $\T$ will be called the \textit{\textbf{$\bfx$-directing of $\T$}}.
\end{defn}
See Figure \ref{xdirectingexamplefig} for two different $\bfx$-directings of the same tree $\T \in \Gamma_{0,6}$, corresponding to two different choices of $\bfx$.
\begin{figure}[h]
    \centering
    \includegraphics[scale=1.25]{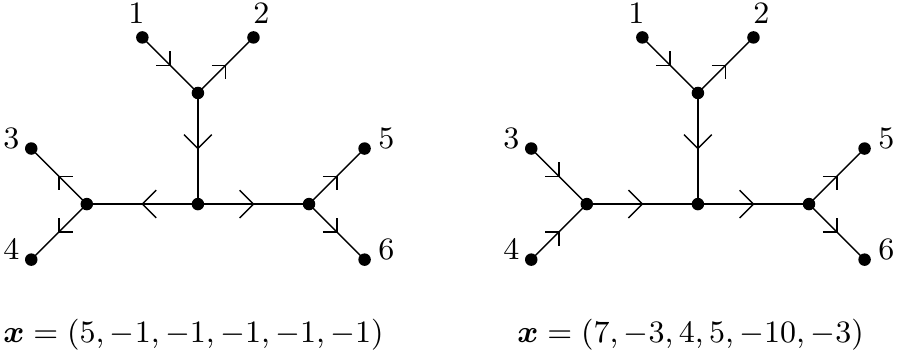}
    \caption{The $\bfx$-directings of a tree $\T \in \Gamma_{0,6}$ for two different choices of $\bfx$.}
    \label{xdirectingexamplefig}
\end{figure}

The following lemma follows immediately from the definition of an $\bfx$-directing.
\begin{lem}\label{DirectingLocallyConstant}
For fixed $\T \in \Gamma_{0,n}$, the $\bfx$-directing of $\T$ depends only on the resonance chamber containing $\bfx$.
\end{lem}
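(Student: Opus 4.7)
The plan is to observe that the $\bfx$-directing of $\T$ is determined edge by edge, and that each edge's direction is encoded by the sign of a linear functional on $\R^n$ whose zero set is one of the walls $W_I$ defining the resonance arrangement. Once this is established, the lemma is immediate from the definition of a resonance chamber as a connected component of the complement of these walls.

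First I would reduce the statement to a pointwise claim about edges. The $\bfx$-directing assigns an orientation to each $e \in E(\T)$, and these orientations are determined independently. It therefore suffices to show that for every fixed edge $e$ of $\T$, the orientation assigned to $e$ by the $\bfx$-directing depends only on the resonance chamber containing $\bfx$.

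Next I would identify the relevant linear form for each edge. If $e$ is an end incident to leaf $i$, then its direction is prescribed by the sign of $x_i$, which is the linear form defining the wall $W_{\{i\}}$. If $e$ is an internal edge, deletion yields two subtrees $T_1, T_2$, and $e$ is oriented from the component of positive total weight toward the one of negative total weight; here the relevant linear form is $\bfx \mapsto w(T_1) = \sum_{i \in L(T_1)\cap L(T)} x_i$, which is exactly the defining form of the wall $W_I$ for $I = L(T_1)\cap L(T)$. In both cases $I$ is a nonempty proper subset of $[n]$, so $W_I$ is one of the walls of the resonance arrangement.

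The final step is to invoke the definition of $A_n$ and of a resonance chamber. Since $\bfx \in A_n \cap \Z^n$, the linear functionals above are nonvanishing at $\bfx$, so the orientations are unambiguously defined. A resonance chamber is, by construction, a connected component of $A_n$; in particular, on any such chamber none of the functionals $\bfx \mapsto \sum_{i \in J} x_i$ (for $\varnothing \neq J \subsetneq [n]$) change sign, for a sign change would force $\bfx$ to cross the wall $W_J$. Therefore the sign data determining the orientation of each edge is locally constant on $A_n$ with discontinuities only along the walls $W_I$, and is in particular constant on each resonance chamber, which proves the lemma. There is no real obstacle here beyond recording that every sign appearing in the definition of the $\bfx$-directing comes from a resonance wall; the claim is then a direct consequence of the definitions.
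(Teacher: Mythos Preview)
Your argument is correct and is exactly the unpacking of the paper's own assertion that the lemma ``follows immediately from the definition of an $\bfx$-directing'': each edge's orientation is governed by the sign of $\sum_{i\in I} x_i$ for some nonempty proper $I\subset [n]$, and these signs are constant on resonance chambers. There is nothing to add.
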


Once a tree $\T \in \Gamma_{0,n}$ is $\bfx$-directed, we get a partial ordering of the vertices of $\T$: $v \leq v'$ if and only if $v = v'$ or there is a directed path from $v$ to $v'$ in $\T$. We will denote this partial ordering by $\leq_{\bfx}$. Strata of $\Mbar(\bfx)$ corresponding to maps whose source curve has stabilized dual graph equal to $\T$ will be seen to be in one-to-one correspondence with ordered partitions of $V(\T)$ which are compatible with this partial ordering.
\begin{defn}\label{admissiblepartitions}
Suppose $S$ is a finite set equipped with a partial order $\preccurlyeq$. Then a \textit{\textbf{$\preccurlyeq$-admissible ordered partition}} of $S$ is an ordered partition $S = P_0 \coprod \cdots \coprod P_{r + 1}$ into nonempty blocks, such that
\begin{enumerate}[(1)]
    \item any pair of distinct elements in block $P_i$ are incomparable under $\preccurlyeq$, and
    \item if $a \in P_i$ and $b \in P_j$ with $i < j$, then either $a \preccurlyeq b$ or $a$ and $b$ are incomparable under $\preccurlyeq$.
    \item if $a \in P_i$ with $1 \leq i \leq r$, then there exist indices $j, k$ with $i < j$, $i > k$, and elements $b \in P_j$, $c \in P_k$, such that $a \preccurlyeq b$ and $c \preccurlyeq a$.
\end{enumerate}
\end{defn}

Let $\T \in \Gamma_{0,n}$ and $\bfx \in A_n \cap \Z^n$. We set
\[L_{\bfx}^+(\T) = \{ i \in L(\T) \mid x_i > 0 \} \mbox{ and } L_{\bfx}^-(\T) = \{ i \in L(\T) \mid x_i < 0 \}. \]
Finally, we set $\calP_{\bfx}(\T)$ to be the set of $\leq_{\bfx}$-admissible ordered partitions of $V(\T)$, such that the first block of the partition is given by $L_{\bfx}^+(\T)$ and the last block is given by $L_{\bfx}^-(\T)$.

The following lemma enumerates combinatorial types of maps in $\Mbar(\bfx)$ in terms of the set $\calP_{\bfx}(\T)$.
\begin{lem}\label{Orderings}
Fix $\bfx \in A_n \cap \Z^n$ and $\T \in \Gamma_{0, n}$. Let $\calC_{\bfx}(\T)$ denote the set of isomorphism classes of combinatorial types of maps in $\Mbar(\bfx)$ whose stabilized source graph is equal to $\T$. Then there is a bijection \[\calC_{\bfx}(\T) \to \calP_{\bfx}(\T). \]
\end{lem}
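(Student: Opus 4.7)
The plan is to exhibit explicit mutually-inverse maps $\Phi: \calC_{\bfx}(\T) \to \calP_{\bfx}(\T)$ and $\Psi: \calP_{\bfx}(\T) \to \calC_{\bfx}(\T)$. Given $\pi: \T' \to \P$ in $\calC_{\bfx}(\T)$ with $\P$ having internal vertices $u_1, \ldots, u_r$ in left-to-right order, I would define $\Phi(\pi)$ by taking $P_i$ for $1 \leq i \leq r$ to be the set of non-bubble preimages of $u_i$ in $\T'$, identified via stabilization with internal vertices of $\T$, together with $P_0 = L_{\bfx}^+(\T)$ and $P_{r+1} = L_{\bfx}^-(\T)$.

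The technical core of the argument is an orientation principle relating $\pi$ to the $\bfx$-directing: for any edge $e \in E(\T)$ whose endpoints $v, v'$ lie in blocks $P_i, P_j$ with $i < j$, the $\bfx$-directing of $\T$ orients $e$ as $v \to v'$. To prove this I would let $T_1$ be the component of $\T \setminus \{e\}$ containing $v$, and let $\tilde T_1 \subset \T'$ be the corresponding subtree of the source (including bubble subdivisions on internal edges of $T_1$). Summing the balancing relation (\ref{Balancing}) over all non-leaf vertices of $\tilde T_1$: edges internal to $\tilde T_1$ cancel in pairs (each contributes with opposite signs at its two endpoints); each leaf $\ell \in L(T_1) \cap L(\T)$ contributes $+x_\ell$ regardless of the sign of $x_\ell$ (a positive $x_\ell$ yields a left-edge of weight $x_\ell$, a negative $x_\ell$ yields a right-edge of weight $-x_\ell$); and the unique boundary edge corresponding to $e$ contributes $-w(e)$, since this edge lies to the right of $v$ in $\pi$'s ordering. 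The resulting identity $w(T_1) - w(e) = 0$ gives $w(T_1) = w(e) > 0$, so the $\bfx$-directing orients $e$ away from $T_1$.

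With the orientation principle in hand, the admissibility of $\Phi(\pi)$ is straightforward. Condition (1) of Definition \ref{admissiblepartitions} follows because a directed $\leq_{\bfx}$-path strictly increases block indices edge by edge, so any such path between two elements of $P_i$ must be trivial; condition (2) is the same observation applied across blocks. For condition (3), a non-bubble $v \in P_i$ with $1 \leq i \leq r$ has $\T'$-valence at least $3$; positivity of weights together with the balancing relation (\ref{Balancing}) at $v$ forces $v$ to carry $\T'$-edges mapping to both $f_{i-1}$ and $f_i$, and tracing these through any intermediate bubbles yields neighbors of $v$ in $\T$ lying in an earlier and a later block respectively.

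The inverse construction $\Psi$ takes an admissible partition $(P_0, \ldots, P_{r+1})$ to the combinatorial type obtained by letting $\P$ be the path with $r$ internal vertices and building $\T'$ from $\T$ by subdividing each edge: conditions (1) and (2) force the endpoints of any $e \in E(\T)$ to lie in distinct blocks $P_i, P_j$ with (say) $i < j$, and I subdivide $e$ into $j - i$ consecutive segments with bubble vertices mapping in order to $u_{i+1}, \ldots, u_{j-1}$. Balancing at bubbles is automatic; at a non-bubble $v \in P_i$ it reduces to the conservation identity $\sum_{\text{in}} w(e) = \sum_{\text{out}} w(e)$ in the $\bfx$-directed tree $\T$, which holds since the weight function on a tree is uniquely determined by flow conservation with leaf data $\bfx$. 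Stability is immediate from non-emptiness of the blocks. The two maps are mutually inverse by construction: $\Phi \circ \Psi$ recovers the input partition, while $\Psi \circ \Phi$ reconstructs the same $\T'$ because the orientation principle forces the walk along each subdivided edge in $\T'$ to be the geodesic in $\P$. The main obstacle in the proof is the orientation principle, the only step that requires genuine use of Lemma \ref{LocalCalculation}-type reasoning rather than purely formal manipulation.
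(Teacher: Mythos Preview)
Your proposal is correct and follows essentially the same strategy as the paper: construct explicit mutually-inverse maps between $\calC_{\bfx}(\T)$ and $\calP_{\bfx}(\T)$, using the balancing condition to match the left-to-right order coming from $\pi$ with the $\bfx$-directing of $\T$. The only difference is stylistic: the paper verifies condition~(2) by contradiction via Lemma~\ref{LocalCalculation}, whereas you prove your orientation principle directly by summing the balancing relation over the subtree $\tilde T_1$; your global summation is essentially an inline version of that lemma, and the inverse construction (subdividing edges, checking balancing via flow conservation) matches the paper's, which cites \cite{CJM} for the same fact.
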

\begin{proof} 
We will explicitly describe a bijection $\calC_{\bfx}(\T) \to \calP_{\bfx}(\T)$. Suppose given a combinatorial type $\hat{\T} \to \P$, such that $\hat{\T}$ stabilizes to $\T$. Suppose further that $\P$ has $r + 2$ vertices for some $r > 0$. We set $v_0 \in V(\P)$ for the leaf labelled by $0$, and $v_{r + 1}$ for the leaf labelled by $\infty$. We then label the remaining vertices $v_i \in V(\P)$ using the left-to-right ordering, so e.g. $v_1$ is the unique non-leaf vertex connected to $v_0$. Then we get an ordered partition of $V(\T)$ by setting $P_i = \pi^{-1}(v_i)$ for $0 \leq i \leq r+1$; here we identify $V(\T)$ with those elements of $V(\hat{\T})$ which are not bivalent. Each $P_i$ is nonempty by the stability condition, and $P_0 = L^{+}_{\bfx}(\T)$ while $P_{r + 1} = L^{-}_{\bfx}(\T)$. We must now check that this partition satisfies all three conditions of Definition \ref{admissiblepartitions}. We first check condition (2). Suppose $i < j$ with $u_1 \in P_i$ and $u_2 \in P_j$, and that $u_1$ and $u_2$ are comparable under $\leq_{\bfx}$; we must show that $u_1 \leq_{\bfx} u_2$. Suppose for sake of contradiction that $u_2 \leq_{\bfx} u_1$. This means that in $\T$, there is a directed path $J$ from $u_2$ to $u_1$. Let $\hat{J}$ be the corresponding path in $\hat{\T}$. We claim that there must be $e'$ in $\hat{J}$ such that $e' \in \mathrm{Right}(u_1)$. Indeed, the geometric realizations of $\hat{\T}$ and $\P$ are finite connected 1-dimensional CW-complexes, and $\pi$ is a cellular, hence continuous, map. Therefore, $\pi$ must take the path $\hat{J}$ to a path between $\pi(u_2)$ and $\pi(u_1)$ in $\P$, from which it follows that $\hat{J}$ contains such an edge $e'$. By our assumption on $\hat{J}$ being directed from $u_2$ to $u_1$, we must have $w(T_1) < 0$, where $T_1$ is the connected component of $\hat{\T} \smallsetminus \{e'\}$ which contains $u_1$. By Lemma \ref{LocalCalculation}, this implies that
\[\sum_{e \in \mathrm{Left}(u_1)} w(e) - \sum_{\substack{{e \in \mathrm{Right}(u_1)}\\{e \neq e'}}} w(e) < 0. \]
However, we have
\[\sum_{e \in \mathrm{Left}(u_1)} w(e) - \sum_{\substack{{e \in \mathrm{Right}(u_1)}\\{e \neq e'}}} w(e) > \sum_{e \in \mathrm{Left}(u_1)} w(e) - \sum_{e \in \mathrm{Right}(u_1)} w(e),  \]
since all the weights are strictly positive, and
\[\sum_{e \in \mathrm{Left}(u_1)} w(e) - \sum_{e \in \mathrm{Right}(u_1)} w(e) = 0\]
by (\ref{Balancing}). We have thus arrived at a contradiction, so it must be that if $u_1$ and $u_2$ are comparable under $\leq_{\bfx}$, then $u_1 \leq_{\bfx} u_2$, and the ordered partition satisfies condition (2). To check condition (1), note that no two elements of $P_i$ are connected by an edge in $\T$, so any path between $u_1, u_2 \in P_i$ must contain an element $u_3 \in P_j$ for $j \neq i$. If $u_3$ cannot be chosen to be comparable with both $u_1$ and $u_2$, then it must be that $u_1$ and $u_2$ are incomparable, since no path between them is directed. Otherwise, suppose that $u_3$ is comparable with both $u_1$ and $u_2$. Then if $j > i$, by condition (2), we have $u_1, u_2 \leq_{\bfx} u_3$, and if $j < i$, then $u_3 \leq_{\bfx} u_1, u_2$. In any case, the path between $u_1$ and $u_2$ cannot be directed, so $u_1$ and $u_2$ are incomparable under $\leq_{\bfx}$, and condition (1) holds. Finally, since $\hat{\T}$ satisfies (\ref{Balancing}), we have that $\mathrm{Left}(v)$ and $\mathrm{Right}(v)$ are nonempty for all $v \in I(\hat{\T})$, so condition (3) is satisfied. In particular, we get a map $\calC_{\bfx}(\T) \to \calP_{\bfx}(\T)$ as described.

We will now construct an inverse to this map. Suppose given a $\leq_{\bfx}$-admissible ordered partition
\[V(\T) = P_0 \coprod \cdots \coprod P_{r + 1}, \]
where $P_0 = L^+_{\bfx}(\T)$ and $P_{r + 1} = L^{-}_{\bfx}(\T)$. 
Then we construct a target path $\P$ with $r + 2$ vertices $v_0, v_1, \ldots, v_r, v_{r + 1}$ so that $v_0$ is the leaf labelled by $0$ and $v_{r + 1}$ is the leaf labelled by $\infty$. We construct our source tree $\hat{\T}$ by subdividing edges of $\T$ with bivalent vertices: if $j > i$ and $e$ is an edge between $u_i \in P_i$ and $u_j \in P_j$, we subdivide $e$ by introducing $j - i - 1$ bivalent vertices. To construct the morphism $\pi$, we map the vertices of $P_i$ to $v_i$, and a path between $u_i \in P_i$ and $u_j \in P_j$ is mapped to the unique path between $v_i$ and $v_j$, which has length $j - i$. Since each $P_i$ is nonempty, each vertex in $\P$ has at least one preimage under $\pi$ with valence at least three, so the resulting morphism of trees is stable. Since the given partition satisfies condition (3), we have that $\mathrm{Left}(v)$ and $\mathrm{Right}(v)$ are nonempty for all $v \in I(\hat{\T})$. This means that if we put a directed structure on $\hat{\T}$ by directing $e$ towards $v$ whenever $e \in \mathrm{Left}(v)$, the resulting directed tree has no sources or sinks. Therefore there is a unique weight function $w: E(\hat{\T}) \to \Z_{> 0}$ such that $w$ is equal to $|x_i|$ on the $i$th end of $\hat{\T}$ and $w$ satisfies the balancing condition (\ref{Balancing}). This is proven in the case where $\hat{\T}$ is trivalent and the weight function is signed in ~\cite[Lemma 6.4]{CJM}; the proof therein carries over with no significant changes. It is now straightforward to construct a relative stable map with combinatorial type $\pi: \hat{\T} \to \P$ such that the weight function $w$ comes from the ramification data of the stable map. As such, we have described an inverse map $\calP_{\bfx}(\T) \to \calC_{\bfx}(\T)$, and the proof is complete.
\end{proof}

For a fixed $n$-marked combinatorial rubber stable map $\pi: \T \to \P$, we put \[\calM_{\bfx}(\pi: \T \to \P) \subset \Mbar(\bfx)\]
for the locally closed stratum of maps having combinatorial type equal to $\pi: \T \to \P$. Then, if this stratum is nonempty, one has an isomorphism of stacks
\begin{equation}\label{StrataFactoring}
\calM_{\bfx}(\pi: \T \to \P) \cong \prod_{\substack{v \in V(\T) \\ \mathrm{val}(v) \geq 3}} \calM_{0, \val(v)}  \times \prod_{\substack{v \in V(\T) \\ \mathrm{val}(v) = 2}} B\mu_{w(v)} \times \prod_{v \in V(\P)} (\C^*)^{|\{u \in \pi^{-1}(v) \mid \val(u) \geq 3\}| - 1}.
\end{equation}
In the above, for a valence two vertex $v$, we have put $w(v)$ for the weight of either edge adjacent to $v$; this is well-defined by (\ref{Balancing}). The $\C^*$ and $\calM_{0, \val(v)}$ factors in (\ref{StrataFactoring}) appear because a map $\bbP^1 \to \bbP^1$ is determined, up to the action of $\C^*$ on the target, by a divisor of degree zero. The factors of $B\mu_{w(v)}$ account for the action of roots of unity on trivial bubbles, corresponding to the degree two vertices. Given a stable tree $\T \in \Gamma_{0,n}$, we put $\calM_{\bfx}(\T) \subset \Mbar(\bfx)$ for the stratum of maps whose source curves have dual graphs isomorphic to $\T$, upon stabilizing.
\begin{prop}\label{TreeStrataProp}
Let $\T \in \Gamma_{0, n}$. We have an equality
\[[\calM_{\bfx}(\T)] = \prod_{v \in I(\T)} [\calM_{0, \val(v)}] \sum_{\scrP \in \mathcal{P}_{\bfx}(\T)} [\C^{*}]^{|I(\T)| - \ell(\scrP)+ 2} \]
in $K_0(\mathsf{Var}/\C)$, where for an ordered partition $\mathscr{P} \in \calP_{\bfx}(\T)$, we put $\ell(\scrP)$ for the number of blocks of $\scrP$.
\end{prop}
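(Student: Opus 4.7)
My plan is to stratify $\calM_{\bfx}(\T)$ by combinatorial type, apply Lemma \ref{Orderings} to re-index the strata by admissible ordered partitions, and then evaluate the class of each stratum using the product decomposition (\ref{StrataFactoring}). Since the boundary strata are locally closed, we have a decomposition
\[\calM_{\bfx}(\T) = \coprod_{\pi \in \calC_{\bfx}(\T)} \calM_{\bfx}(\pi: \hat{\T} \to \P),\]
where the disjoint union is over isomorphism classes of combinatorial types with stabilized source $\T$. This yields a corresponding equality of classes in $K_0(\mathsf{Var}/\C)$. By Lemma \ref{Orderings}, the indexing set $\calC_{\bfx}(\T)$ is in bijection with $\calP_{\bfx}(\T)$, so after re-indexing it suffices to compute $[\calM_{\bfx}(\pi)]$ for the combinatorial type associated with a given $\scrP = P_0 \coprod \cdots \coprod P_{r+1} \in \calP_{\bfx}(\T)$ and show that it equals $\prod_{v \in I(\T)} [\calM_{0, \val(v)}] \cdot [\C^{*}]^{|I(\T)| - \ell(\scrP) + 2}$.

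Three simplifications of (\ref{StrataFactoring}) will do the job. First, the construction in the proof of Lemma \ref{Orderings} builds $\hat{\T}$ from $\T$ by subdividing edges with bivalent vertices, so the vertices of $\hat{\T}$ with valence at least $3$ are in natural bijection with $I(\T)$, and $\val$ is preserved; this turns $\prod_{v \in V(\hat{\T}),\, \val(v) \geq 3} \calM_{0, \val(v)}$ into $\prod_{v \in I(\T)} \calM_{0, \val(v)}$. Second, each factor $B\mu_{w(v)}$ coming from a bivalent vertex has coarse moduli equal to a point and thus contributes the class $1$ in $K_0(\mathsf{Var}/\C)$, so it drops out. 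Third, for each internal vertex $v_i$ of the target path $\P$ (with $1\leq i \leq r$, where $r = \ell(\scrP)-2$), the preimage $\pi^{-1}(v_i)$ meets the vertices of valence at least $3$ in exactly the block $P_i$, which consists only of internal vertices of $\T$ because the leaves of $\T$ are collected into $P_0 = L_{\bfx}^+(\T)$ and $P_{r+1} = L_{\bfx}^-(\T)$; hence the $\C^{*}$-exponent contributed is $|P_i| - 1$. The leaves $v_0, v_{r+1}$ of $\P$ contribute no $\C^{*}$-factors.

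Summing over $i = 1, \ldots, r$ gives the total exponent
\[\sum_{i=1}^{r} (|P_i| - 1) \;=\; |I(\T)| - r \;=\; |I(\T)| - \ell(\scrP) + 2,\]
since $P_1, \ldots, P_r$ partition $I(\T)$. Combining these three simplifications yields $[\calM_{\bfx}(\pi)] = \prod_{v \in I(\T)} [\calM_{0,\val(v)}] \cdot [\C^*]^{|I(\T)| - \ell(\scrP) + 2}$, and summing over $\scrP \in \calP_{\bfx}(\T)$ and factoring out the common product gives the stated formula. The proof is essentially a bookkeeping exercise once Lemma \ref{Orderings} and the stratum decomposition (\ref{StrataFactoring}) are in hand; the main substantive point is the combinatorial identity for the $\C^*$-exponent, which depends crucially on the fact that the first and last blocks of any $\scrP \in \calP_{\bfx}(\T)$ are precisely the leaves of $\T$, leaving the internal vertices to be partitioned among the interior blocks $P_1, \ldots, P_r$.
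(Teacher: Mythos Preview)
Your proof is correct and follows essentially the same approach as the paper's: stratify $\calM_{\bfx}(\T)$ by combinatorial type, pass to coarse moduli so that the $B\mu_{w(v)}$ factors become trivial, identify the surviving $\calM_{0,\val(v)}$ factors with the internal vertices of $\T$, and compute the $\C^{*}$-exponent as $\sum_{i=1}^{r}(|P_i|-1)=|I(\T)|-r=|I(\T)|-\ell(\scrP)+2$ before summing over $\calC_{\bfx}(\T)\cong\calP_{\bfx}(\T)$ via Lemma~\ref{Orderings}. The paper's write-up is slightly terser (it phrases the exponent as $|I(\T)|-|I(\P)|$ and then observes $|I(\P)|=\ell(\scrP)-2$), but the argument is the same.
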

\begin{proof}
Let $\pi: \hat{\T} \to \P$ be a combinatorial type such that $\hat{\T}$ stabilizes to $\T$. By (\ref{StrataFactoring}), we see that upon taking coarse moduli spaces, bivalent vertices do not contribute to the class of $\M_{\bfx}(\pi: \hat{\T} \to \P)$ in $K_0(\mathsf{Var}/\C)$. Let $\mathrm{st}: V(\T) \to V(\hat{\T})$ be the natural inclusion of the vertex set of $\T$ into that of $\hat{\T}$, so that the image of $\mathrm{st}$ is precisely those vertices of $\hat{\T}$ which are not bivalent. Define $\pi_{\mathrm{st}} : V(\T) \to V(\P)$ by $\pi_{\mathrm{st}} =  \pi \circ \mathrm{st}$. Then we have
\begin{align*}
[\M_{\bfx}(\pi: \hat{\T} \to \P)] &= \left(\prod_{v \in I(\T)} [\calM_{0, \val(v)}]\right) \cdot \left(\prod_{v \in I(\P)} [\C^*]^{|\pi_{\mathrm{st}}^{-1}(v)| - 1}\right) \\&= \left(\prod_{v \in I(\T)} [\calM_{0, \val(v)}]\right) \cdot [\C^*]^{|I(\T)| - |I(\P)|}.
\end{align*}
Note that $|I(\P)|$ is two less than the number of blocks of the ordered partition of $V(\T)$ which is induced by the map $\pi: \hat{\T} \to \P$ as in Lemma \ref{Orderings}. Therefore, summing over all combinatorial types whose stabilized source is equal to $\T$, we get
\[\calM_{\bfx}(\T) = \sum_{\scrP \in \calP_{\bfx}(\T)} \left(\prod_{v \in I(\T)} [\calM_{0, \val(v)}]\right) \cdot [\C^*]^{|I(\T)| - \ell(\scrP) + 2}, \]
as desired.
\end{proof}
Theorem \ref{LocallyConstant} now follows from the framework discussed in this section.
\begin{proof}[Proof of Theorem \ref{LocallyConstant}]
We have
\begin{align*}
    [\Mbar(\bfx)] &= \sum_{\T \in \Gamma_{0,n}}[\calM_{\bfx}(\T)] \\&= \sum_{\T \in \Gamma_{0,n}} \prod_{v \in I(\T)} [\calM_{0, \val(v)}] \sum_{\scrP \in \mathcal{P}_{\bfx}(\T)} [\C^{*}]^{|I(\T)| - \ell(\scrP) + 2}.
\end{align*}
The set $\calP_{\bfx}(\T)$ depends on the partial ordering $\leq_{\bfx}$, and this is determined by the $\bfx$-directing of $\T$. By Lemma \ref{DirectingLocallyConstant}, the $\bfx$-directing of $\T$ is constant as $\bfx$ varies in a resonance chamber, so the proof is complete.
\end{proof}
\section{Sums over trees}\label{TreeSums}
In this section we study the bivariate generating function 
\[\Psi(s, t) = \sum_{k \geq 1} \sum_{n \geq 2} \chi(\Mbar_n(k)) \frac{s^k t^n}{k! n!}  \]
defined in the introduction. Recall that $\Mbar_n = \Mbar(\bfx)$ for \[\bfx = (n, \underbrace{-1, \ldots, -1}_{n \text{ times}}).\]
Thus $\Mbar_n$ parameterizes rubber stable maps \[(X, p_1, \ldots, p_n) \to (Y, 0_{Y}, \infty_Y)\] which are maximally ramified over $0_Y \in Y$ and unramified over $\infty_Y \in Y$. There is a birational morphism $\pi_n: \Mbar_n \to \Mbar_{0, n + 1}$, and $\Mbar_n(k)$ is defined as the preimage under $\pi_n$ of the locus of curves in $\Mbar_{0,n+1}$ which have exactly $k$ irreducible components. We will prove that $\Psi$ satisfies the differential equation given in Theorem \ref{Recursion}:
\begin{equation}\label{DifferentialEquation}
\frac{\partial \Psi}{\partial s} = (1 +t)\left(\log(1 + t) + \exp\left( -\frac{\Psi}{1 + t} \right)\right) + \Psi(\log(1 + t) + 1) - 2t - 1.
\end{equation}
Towards proving (\ref{DifferentialEquation}), it is useful to introduce some auxiliary generating functions. For each $k \geq 1$, define
\[\nu_k(t) \defeq \sum_{n \geq 2} \chi(\Mbar_n(k))\frac{t^n}{n!}. \]
With this notation in place, we have
\[\Psi(s, t) = \sum_{k \geq 1} \nu_k(t) \frac{s^k}{k!}. \]
Equation (\ref{DifferentialEquation}) and hence Theorem \ref{Recursion} will be deduced from Proposition \ref{Bell} below, which makes reference to the \textit{partial exponential Bell polynomials}. To define these, we will use the notation $\lambda \vdash m$ to mean that $\lambda$ is a partition of $m$, and we write $\lambda = (\lambda_1, \ldots, \lambda_m)$ to indicate that $\lambda$ consists of $\lambda_i$ copies of $i$ for $1 \leq i \leq m$, i.e. $\sum i \lambda_i = m$. Moreover, we put $\ell(\lambda) \defeq \sum_{i} \lambda_i$ for the length of $\lambda$. With this notation in hand, the partial exponential Bell polynomial $B_{m, j}$ is a polynomial in the variables $x_1, \ldots, x_{m - j + 1}$, homogeneous of degree $j$, and is defined by the formula
\begin{equation}\label{BellDefn}
B_{m, j}(x_1, \cdots, x_{m - j + 1}) \defeq \sum_{\substack{\lambda \vdash m \\ \ell(\lambda) = j}} \frac{m!}{\prod_{i = 1}^{m} (i!)^{\lambda_i}} \prod_{i = 1}^{m} \frac{x_i^{\lambda_i}}{\lambda_i!}.
\end{equation}
This formula indeed defines a polynomial in variables $x_1, \ldots, x_{m - j + 1}$, because together the conditions that $\lambda \vdash m$ and $\ell(\lambda) = j$ imply that $\lambda_i = 0$ for $i > m - j + 1$. Bell polynomials have a combinatorial interpretation: the coefficient of a monomial $x_{i_1}^{k_{i_1}}\cdots x_{i_n}^{k_{i_n}}$ in $B_{m,j}$ counts the number of ways to partition a set of size $m$ into $j$ blocks, such that there are exactly $k_{i_r}$ blocks of size $i_r$ for each $1 \leq r \leq n$. These polynomials arise in Fa{\`a} di Bruno's formula for the coefficients of the composition of a pair of exponential generating functions ~\cite{Comtet}: if
\[g(x) = \sum_{n \geq 0} \frac{b_n}{n!} x^n \mbox{ and } f(x) = \sum_{n \geq 1} \frac{a_n}{n!} x^n, \]
then
\begin{equation}\label{FaaDiBruno}
    (g \circ f)(x) = b_0 + \sum_{n \geq 1} \frac{\sum_{k = 1}^{n} b_k B_{n,k}(a_1, \ldots, a_{n - k + 1})}{n!} x^n.
\end{equation}
\begin{prop}\label{Bell}
We have
\[\nu_m(t) = \sum_{j = 1}^{m - 1} \nu_1^{(j)}(t) B_{m-1, j}(\nu_1(t), \ldots, \nu_{m - j}(t)) \]
for $m \geq 2$, where $\nu_1^{(j)}(t)$ denotes the $j$th derivative of $\nu_1(t)$.
\end{prop}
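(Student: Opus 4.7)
The plan is to derive the formula by decomposing each labeled tree contributing to $\chi(\Mbar_n(m))$ according to its root, then translating this combinatorial decomposition into an identity of exponential generating functions.

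First I would specialize the machinery of Section \ref{background} to the central chamber. For $\bfx = (n, -1, \ldots, -1)$, every internal edge of any $\T \in \Gamma_{0, n + 1}$ is directed away from the component containing leaf $1$, while the ends at leaves $2, \ldots, n + 1$ are directed outward. Hence the $\bfx$-directing realizes $\T$ as a rooted tree with root $v_1$ the unique internal vertex adjacent to leaf $1$, and the restriction of $\leq_{\bfx}$ to $I(\T)$ is the ancestry partial order of this rooted tree. Combining Proposition \ref{TreeStrataProp} with $\chi(\C^*) = 0$, only strata with $\ell(\scrP) = |I(\T)| + 2$ contribute to the Euler characteristic; these correspond to partitions whose $|I(\T)|$ middle blocks are the singletons of $I(\T)$ arranged in some linear extension of the ancestry order. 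This yields
\[\chi(\Mbar_n(m)) \;=\; \sum_{(\T, \sigma)} \prod_{v \in I(\T)} \chi(\calM_{0, \val(v)}),\]
where the sum is over pairs of a tree $\T \in \Gamma_{0, n + 1}$ with $|I(\T)| = m$ and a bijection $\sigma \colon I(\T) \to [m]$ strictly increasing along directed edges.

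Next I would decompose each labeled pair $(\T, \sigma)$ by its root $v_1 = \sigma^{-1}(1)$. Writing $\val(v_1) = 1 + j + \ell$ with $j$ internal children and $\ell$ leaf children among $\{2, \ldots, n + 1\}$, removing $v_1$ yields an unordered $j$-tuple of labeled subtrees $(\tilde{\T}_k, \sigma_k)$. Each $\tilde{\T}_k$, with the node attaching it to $v_1$ treated as a new positive leaf, lies in $\Gamma_{0, n_k + 1}$ with some $m_k$ internal vertices, where $\sum_k m_k = m - 1$, and the restrictions of $\sigma$ partition $\{2, \ldots, m\}$ into blocks of sizes $m_1, \ldots, m_j$. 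A key compatibility is that the induced $\bfx^k$-directing of $\tilde{\T}_k$ for $\bfx^k = (n_k, -1, \ldots, -1)$ agrees with the restriction of the $\bfx$-directing on $\T$, so each $(\tilde{\T}_k, \sigma_k)$ contributes to $\chi(\Mbar_{n_k}(m_k))$ in the same way. The final step is to translate this into EGFs in the variable $t$ tracking leaves of $\{2, \ldots, n + 1\}$. The root's leaf contribution is
\[\sum_{\ell \geq \max(0,\, 2 - j)} \chi(\calM_{0,\, 1 + j + \ell}) \frac{t^\ell}{\ell!},\]
which a direct inspection of the derivatives of $\nu_1$ shows equals $\nu_1^{(j)}(t)$ for every $j \geq 0$. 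The aggregated contribution of the $j$ unordered labeled subtrees, combined with the set-partition of $\{2, \ldots, m\}$ into their label-blocks, is precisely
\[B_{m - 1, j}(\nu_1(t), \ldots, \nu_{m - j}(t)) \;=\; \frac{1}{j!}\sum_{\substack{m_1 + \cdots + m_j = m - 1 \\ m_k \geq 1}} \binom{m - 1}{m_1, \ldots, m_j} \prod_{k = 1}^j \nu_{m_k}(t),\]
obtained by regrouping (\ref{BellDefn}). Multiplying these two EGF factors merges external leaves among root and subtrees via the standard EGF product rule, and summing over $j$ (noting that $B_{m - 1, 0}$ and $B_{m - 1, j}$ for $j \geq m$ vanish) yields the claimed identity.

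The principal obstacle is the bookkeeping: I must verify that the unordered $j$-tuples of subtrees obtained from the root decomposition correspond bijectively to the unordered block structure encoded in $B_{m - 1, j}$, and simultaneously that the EGF product in $t$ correctly merges the external leaves among root and subtrees while the set-partition of $\{2, \ldots, m\}$ independently accounts for the internal-vertex labels. Once this correspondence is spelled out, the identity follows by direct EGF manipulation.
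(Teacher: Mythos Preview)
Your proposal is correct and follows the same root-decomposition strategy as the paper, but the bookkeeping differs in a way worth noting. The paper first passes from labeled trees to \emph{ribbon rooted trees} (Lemma~\ref{SumOverRRT}), absorbing the factors $1/|\Aut(T,l)|$ into the weights $a_T$ via cyclic orderings at each vertex; it then decomposes each ribbon tree at its base (Lemma~\ref{CuttingTheCake}) and finally matches the resulting sum over $\RRT_\lambda$ to the Bell monomials by a careful multiset argument (equation~(\ref{CompareCoeffs})). You instead stay with fully leaf-labeled trees $\T \in \Gamma_{0,n+1}$ together with a linear extension $\sigma$ of $I(\T)$, exploiting that such pairs have no nontrivial automorphisms; the root decomposition then becomes a clean bijection, and the Bell polynomial appears directly from its set-partition interpretation without the ribbon detour. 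Both routes encode the same combinatorics; the paper's ribbon-tree formulation follows the style of McMullen and Manin and makes the weight $a_T$ manifestly multiplicative, while your approach is somewhat more direct at the cost of having to argue separately that the unordered $j$-tuple of subtrees is faithfully recorded by the pair (set partition of $\{2,\ldots,m\}$, EGF product in $t$)---which, as you note, is the only nontrivial bookkeeping step.
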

Before proving Proposition \ref{Bell}, we pause to show how it implies Theorem \ref{Recursion}.
\begin{proof}[Proof of Theorem \ref{Recursion}]
Observe that
\[\nu_1(t) = \sum_{n \geq 2} \chi(\calM_{0,n+1})\frac{t^n}{n!}. \]
It is well-known that 
\[\nu_1(t) = (1+t)\log(1+t) - t. \]
This can be shown using the fibration $\calM_{0,n+1} \to \calM_{0,n}$ \cite{Manin, McMullen}. Therefore we have $\nu_1'(t) = \log(1 + t)$ and
\[\nu_1^{(j)}(t) = (-1)^{j}\frac{1}{(1 + t)^{j - 1}} \]
for $j \geq 2$. Thus the series
\[G(t,s) \defeq \sum_{j \geq 0} \nu_1^{(j)}(t) \frac{s^j}{j!} \]
satisfies
\begin{align*}
G(t, s) &= (1 + t)\log(1 + t) - t + s\log(1 + t) + \sum_{j \geq 2} (-1)^{j} \frac{1}{(1 + t)^{j-1}} \frac{s^j}{j!} 
\\&= (1 + t + s)\log(1 + t) +(1 + t)\exp\left( -\frac{s}{1 + t} \right) + s - 2t - 1.
\end{align*}
Now we apply Fa{\`a} di Bruno's formula (\ref{FaaDiBruno}) with $a_n = \nu_n(t)$ and $b_n = \nu_1^{(n)}(t)$, together with Proposition \ref{Bell}, to see that
\[G(t, \Psi) = \nu_1(t) + \sum_{n \geq 1} \nu_{n + 1}(t) \frac{s^n}{n!} = \frac{\partial \Psi}{\partial s}. \]
The initial condition on $\Psi$ follows from its definition, so the theorem is proved upon rearranging the formula for $G(t, \Psi)$.
\end{proof}
The remainder of this section is concerned with proving Proposition \ref{Bell}. We first need to establish some preliminary notions. Many of the considerations in the following are inspired by McMullen's expository account ~\cite{McMullen} of prior work of Manin ~\cite{Manin} and Getzler ~\cite{Getzler} on the generating function for Euler characteristics of the moduli spaces $\Mbar_{0, n+1}$. Recall that a \textit{stable tree} is a finite connected tree which has no bivalent vertices. 

\begin{defn}
A \textit{\textbf{rooted tree}} $(T, l)$ is a stable tree $T$ together with a choice of leaf $l \in L(T)$.
\end{defn}

Given an $n$-marked stable tree $\T \in \Gamma_{0,n}$, we can get a rooted tree $(T, l)$ by forgetting all leaf-labellings except for that leaf labelled by $1$. We will also need the notion of a ribbon tree.

\begin{defn}
A \textit{\textbf{ribbon rooted tree}} $(T, l, c)$ is a rooted tree together with a cyclic order $c$ of the half-edges emanating from each vertex. 
\end{defn}

A ribbon structure on a rooted tree $(T, l)$ is equivalent to the data of an embedding of $(T, l)$ in the plane. We will use the notation $\RRT$ for the set of all (isomorphism classes of) ribbon rooted trees, and $\RRT(m) \subset \RRT$ will denote the set of ribbon rooted trees with $m$ internal vertices.

The first step in the proof of Proposition \ref{Bell} is to observe that when $\bfx$ is in the central chamber, the $\bfx$-directing of a tree $\T \in \Gamma_{0,n+1}$ is completely determined by the position of the leaf labelled by $1$. That is, the set $\calP_{\bfx}(\T)$ in Lemma \ref{Orderings} is completely determined by the underlying rooted tree $(T, l)$ obtained from $\T$ by forgetting all but the first marking: see Figure \ref{xdirectingexamplefig}. Given a rooted tree $(T, l)$, we define $o_T$ to be the number of total orderings of $I(T)$ which are compatible with the $\bfx$-directing when $\bfx$ is in the central chamber. We also define $A_1 \defeq 1$, and
\begin{equation}\label{nonfactorialweights}
    A_{d} \defeq \chi(\calM_{0,d}) = (-1)^{d - 1}(d - 3)!
\end{equation}
for $d \geq 3$. For a stable tree $T$, we put
\[a_T \defeq \prod_{v \in V(T)} \frac{A_{\val(v)}}{(\val(v) - 1)!}. \]
We also put
\[N_T \defeq |L(T)| - 1. \]
The following lemma expresses the generating function $\nu_m(t)$ in terms of $\RRT(m)$ and the weights $o_T$, $a_T$ and $N_T$.

\begin{lem}\label{SumOverRRT}
We have
\[\nu_m(t) = \sum_{T \in \RRT(m)} a_T o_T t^{N_T} \]
for $m \geq 1$.
\end{lem}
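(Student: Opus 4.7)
The plan is to combine Proposition \ref{TreeStrataProp} with the special structure of the central chamber and then reorganize the resulting sum over marked trees into a sum over ribbon rooted trees. First I will take $\chi$ of the formula in Proposition \ref{TreeStrataProp}. Since $\chi(\C^*) = 0$, only those ordered partitions $\scrP \in \calP_\bfx(\T)$ survive for which the exponent $|I(\T)| - \ell(\scrP) + 2 = 0$. Because the first and last blocks of any $\scrP \in \calP_\bfx(\T)$ are pinned to the leaf sets $L^{+}_\bfx(\T)$ and $L^{-}_\bfx(\T)$, such a $\scrP$ has every middle block a singleton, so the surviving data is exactly that of a total ordering of $I(\T)$ compatible with $\leq_\bfx$. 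After intersecting with $\Mbar_n(k) = \bigsqcup_{|I(\T)| = k} \calM_\bfx(\T)$, this expresses $\chi(\Mbar_n(k))$ as a sum over $\T \in \Gamma_{0, n+1}$ with $|I(\T)| = k$, weighted by $\prod_{v \in I(\T)} \chi(\calM_{0, \val(v)})$ times the number of linear extensions of $(I(\T), \leq_\bfx)$.

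Next, I will exploit the central chamber. For $\bfx = (n, -1, \ldots, -1)$, every edge-deletion leaves the component containing leaf $1$ with strictly positive weight, so the $\bfx$-directing of any $\T \in \Gamma_{0,n+1}$ is simply the rooted-tree directing obtained by rooting at the leaf labelled $1$. In particular the number of linear extensions depends only on the rooted tree $(T, l)$ underlying $\T$ and equals $o_T$. I will then forget the labels $2, \ldots, n+1$ and group the sum by rooted-tree isomorphism type: each unlabeled rooted tree $(T, l)$ with $N_T$ non-root leaves is realized by exactly $N_T!/|\Aut(T, l)|$ elements of $\Gamma_{0, n+1}$. Cancelling the $1/n!$ coming from the exponential generating function produces a sum over isomorphism classes of rooted trees $(T, l)$ with $m$ internal vertices, weighted by $o_T\, t^{N_T} \prod_{v \in I(T)} \chi(\calM_{0, \val(v)}) / |\Aut(T, l)|$.

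The last and main step is the conversion to $\RRT(m)$ via an orbit-stabilizer argument. A ribbon structure on $(T, l)$ is a cyclic ordering of the half-edges at each internal vertex, so there are $\prod_{v \in I(T)} (\val(v) - 1)!$ such structures on a fixed $(T, l)$. Since a ribbon rooted tree has no nontrivial automorphisms, $\Aut(T, l)$ acts freely on this set, so each rooted tree $(T, l)$ refines to exactly $\prod_{v \in I(T)} (\val(v) - 1)!/|\Aut(T, l)|$ isomorphism classes in $\RRT(m)$. Noting that $o_T$, $N_T$ and the valence function are all preserved under forgetting the ribbon structure, the sum above rewrites as a sum over $\RRT(m)$ in which the $(\val(v) - 1)!$ denominators combine with $\chi(\calM_{0, \val(v)}) = A_{\val(v)}$ to produce exactly $a_T$, with leaves contributing the trivial factor $A_1/0! = 1$. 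The main obstacle is precisely this orbit-stabilizer bookkeeping; in particular one must verify that $\Aut(T, l)$ acts freely on ribbon structures, equivalently that a plane rooted tree is rigid, which is the standard fact that makes the $(\val(v) - 1)!$ factors cancel cleanly.
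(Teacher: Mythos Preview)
Your proposal is correct and follows essentially the same approach as the paper's own proof: take Euler characteristics in Proposition \ref{TreeStrataProp} so that only $\scrP$ with $\ell(\scrP)=|I(\T)|+2$ survive, identify these with linear extensions of $\leq_{\bfx}$ on $I(\T)$, use the central-chamber observation that the $\bfx$-directing is the rooting at leaf $1$ so this count is $o_T$, then pass from $\Gamma_{0,n+1}$ to rooted trees via the factor $N_T!/|\Aut(T,l)|$ and from rooted trees to ribbon rooted trees via $\prod_v (\val(v)-1)!/|\Aut(T,l)|$. The only point you make more explicit than the paper is the free action of $\Aut(T,l)$ on ribbon structures (rigidity of plane rooted trees), which the paper uses implicitly in its count of ribbon structures.
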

\begin{proof}
Taking Euler characteristics on both sides of the equation in Proposition \ref{TreeStrataProp}, we see that
\[\chi(\calM_{\bfx}(\T)) = \prod_{v \in I(\T)} \chi(\calM_{0, \val(v)}) \cdot |\{\mathscr{P} \in \calP_{\bfx}(\T) \mid \ell(\mathscr{P}) - 2 = |I(\T)| \}| \]
when $\bfx$ is chosen from the central chamber, and $(T, l)$ is the rooted tree obtained from $\T$ by forgetting all but the first marking. The set
\[\{\mathscr{P} \in \calP_{\bfx}(\T) \mid \ell(\mathscr{P}) - 2 = |I(\T)| \}\]
is in bijection with the set of total orderings of $I(\T)$ which refine $\leq_{\bfx}$: one takes the total ordering induced by the ordering of the non-extremal blocks of $\mathscr{P}$. Since $\ell(\mathscr{P}) - 2 = |I(\T)|$, all such blocks are necessarily singletons. Therefore
\[|\{\mathscr{P} \in \calP_{\bfx}(\T) \mid \ell(\mathscr{P}) - 2 = |I(\T)| \}| = o_T \]
and we can write
\[\nu_m(t) = \sum_{n \geq 2} \sum_{\substack{{\T \in \Gamma_{0, n + 1}}\\ |I(\T)| = m}} o_T \cdot A_{T} \frac{t^{N_T}}{N_T!}, \]
where 
\[A_T \defeq \prod_{v \in V(T)} A_{\val(v)}. \]
Since a rooted tree $(T, l)$ can be made into an element of $\Gamma_{0, n+ 1}$ in $N_T!/|\Aut(T, l)|$ many ways, we can write
\[\nu_m(t) = \sum_{\substack{{(T,l) \text{ rooted tree}}\\ |I(T)| = m}} o_T \cdot A_T \frac{N_T!}{|\Aut(T, l)|} \frac{t^{N_T}}{N_T!}. \]
We now pass to ribbon rooted trees, noting that a rooted tree $T$ has \[\prod_{v \in T}\frac{(\val(v) - 1)!}{|\Aut(T, l)|}\] ribbon structures. Thus
\begin{align*}
    \nu_m(t) &= \sum_{T \in \RRT(m)} o_T \cdot A_T \frac{|\Aut(T, l)|}{\prod_{v \in V(T)}(\val(v) - 1)!} \frac{N_T!}{|\Aut(T,l)|} \frac{t^{N_T}}{N_T!} \\&= \sum_{T \in \RRT(m)} a_T o_T t^{N_T},
\end{align*}
as we wanted to show.
\end{proof}

We will prove Proposition \ref{Bell} by summing over all possible ways of decomposing a ribbon rooted tree into smaller pieces, in a sense we now make precise. Given a rooted tree $(T, l)$, define the \textit{\textbf{base}} of $(T, l)$ to be the unique vertex $b_T \in V(T)$ which is connected by an edge to $l$.
\begin{defn}\label{DecompositionType}
Let $(T, l, c) \in \RRT(m)$. The \textbf{\textit{decomposition type}} of $(T, l, c)$ is the tuple $(k, j, \lambda)$ defined as follows:
\begin{enumerate}[(1)]
    \item $k$ is the number of leaves, besides the root of $T$, which are connected to the base of $T$,
    \item $j = \val(b_T) - k - 1$, and
    \item for each of the $j$ edges $e$ containing $b_T$ which are not ends, one gets a smaller ribbon rooted tree by cutting $e$ in half, and taking the side which does not contain $b_T$. We get a partition $\lambda = (\lambda_1, \ldots, \lambda_{m - 1})$ of $m - 1$ with $\ell(\lambda) = j$ by letting $\lambda_i$ be the number of these smaller trees with exactly $i$ internal vertices.
\end{enumerate}
\end{defn}
See Figure \ref{decompositionexmp} for an example of a decomposition type of a tree in $\RRT(5)$.
\begin{figure}[h]
    \centering
    \includegraphics[scale = 1.25]{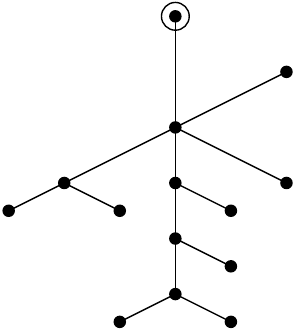}
    \caption{A rooted ribbon tree in $\RRT(5)$ with decomposition type given by $k = j = 2$ and $\lambda = (1, 0, 1, 0)$.}
    \label{decompositionexmp}
\end{figure}
Suppose given $(T, l, c) \in \RRT(m)$ with decomposition type $(k, j, \lambda)$. Then, following part (3) of Definition \ref{DecompositionType}, one gets an ordered tuple $(T_1, \ldots, T_j)$ of rooted ribbon trees such that exactly $\lambda_i$ of them have $i$ internal vertices, where the ordering comes from the ribbon structure at the base vertex $b_T$. We will put $\RRT_\lambda$ for the set of all such ordered tuples. The following lemma expresses $\nu_m(t)$ as a sum over $j, \lambda$ and $\RRT_\lambda$.
\begin{lem}\label{CuttingTheCake}
Suppose $m \geq 2$. Then we have
\[\nu_m(t) = \sum_{j \geq 1}\nu_1^{(j)}(t) \cdot \frac{1}{j!} \sum_{\substack{{\lambda \vdash (m - 1)}\\\ell(\lambda) = j}} \frac{(m-1)!}{\prod_{i = 1}^{m - 1} (i!)^{\lambda_i}} \sum_{(T_1, \ldots, T_j) \in {\RRT_\lambda}} \prod_{i = 1}^{j} a_{T_i} o_{T_i} t^{N_{T_i}}.\]
\end{lem}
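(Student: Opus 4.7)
The plan is to start from Lemma \ref{SumOverRRT}, which expresses $\nu_m(t)$ as a sum over $\RRT(m)$, and to partition that sum by decomposition type $(k,j,\lambda)$, factoring the local weights $a_T$, $o_T$, and $t^{N_T}$ through the ordered subtree tuple $(T_1,\ldots,T_j)\in\RRT_\lambda$ arising from Definition \ref{DecompositionType}.

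The factorization of $a_T$ and $N_T$ is purely local at the base vertex $b_T$. Since the root $l$, the $k$ additional leaves at $b_T$, and the new root leaves of each $T_s$ all have valence one and contribute $A_1/0!=1$, we obtain
\[a_T=\frac{A_{k+j+1}}{(k+j)!}\prod_{s=1}^{j}a_{T_s},\qquad N_T=k+\sum_{s=1}^{j}N_{T_s}.\]
The factorization of $o_T$ requires a brief analysis of the $\bfx$-directing. When $\bfx=(n,-1,\ldots,-1)$ lies in the central chamber, the total weight of any subtree of $T$ containing leaf $1$ is strictly positive, so every internal edge of $T$ points away from leaf $1$. Consequently $b_T$ is the $\leq_{\bfx}$-minimum of $I(T)$, the sets $I(T_s)$ are pairwise $\leq_{\bfx}$-incomparable, and the restriction of $\leq_{\bfx}$ to each $I(T_s)$ coincides with its intrinsic $\leq_{\bfx}$-order after viewing $T_s$ as a rooted tree with new root leaf. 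A linear extension on $I(T)$ is thus $b_T$ followed by a shuffle of linear extensions on the $I(T_s)$, giving
\[o_T=\frac{(m-1)!}{\prod_{s=1}^{j}|I(T_s)|!}\prod_{s=1}^{j}o_{T_s}=\frac{(m-1)!}{\prod_{i=1}^{m-1}(i!)^{\lambda_i}}\prod_{s=1}^{j}o_{T_s}.\]

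Next I would enumerate the $T\in\RRT(m)$ that produce a prescribed ordered tuple $(T_1,\ldots,T_j)\in\RRT_\lambda$ and a prescribed $k$. The cyclic order at $b_T$ has one half-edge fixed as the edge toward $l$, and the remaining $k+j$ positions must accommodate the $j$ subtree half-edges in the given cyclic order together with $k$ leaf half-edges interleaved among them, yielding $\binom{k+j}{k}$ such $T$. Combining all three calculations, the contribution of fixed data $(k,j,\lambda,(T_1,\ldots,T_j))$ to $\nu_m(t)$ becomes
\[\binom{k+j}{k}\cdot\frac{A_{k+j+1}}{(k+j)!}\cdot\frac{(m-1)!}{\prod_{i=1}^{m-1}(i!)^{\lambda_i}}\,t^k\prod_{s=1}^{j}a_{T_s}o_{T_s}t^{N_{T_s}}=\frac{A_{k+j+1}}{k!\,j!}\cdot\frac{(m-1)!}{\prod_{i=1}^{m-1}(i!)^{\lambda_i}}\,t^k\prod_{s=1}^{j}a_{T_s}o_{T_s}t^{N_{T_s}}.\]

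Finally I would sum over $k$ in the range dictated by the stability bound $\val(b_T)=k+j+1\geq3$ (that is, $k\geq\max(0,2-j)$) and use $\nu_1(t)=\sum_{n\geq2}A_{n+1}t^n/n!$ to identify $\sum_k A_{k+j+1}t^k/k!=\nu_1^{(j)}(t)$; the resulting rearrangement yields the stated formula. The main obstacle is the analysis of $o_T$: one must verify that the central-chamber assumption concentrates all positive weight at a single leaf, so that the $\bfx$-directing is uniformly outward and restricts faithfully to each $T_s$ with its new root leaf playing the role of the positive-weight leaf. Only with this structural observation does $o_{T_s}$ correctly count the extensions of the partial order inherited from $T$, and only then does the multiplicative shuffle formula for $o_T$ hold.
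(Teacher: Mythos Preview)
Your proposal is correct and follows essentially the same route as the paper's proof: both start from Lemma~\ref{SumOverRRT}, partition $\RRT(m)$ by decomposition type $(k,j,\lambda)$, factor $a_T$, $o_T$, and $t^{N_T}$ through the subtree tuple, count the $\binom{k+j}{k}$ ribbon structures at $b_T$ compatible with a fixed ordered tuple, and then sum over $k$ to recognize $\nu_1^{(j)}(t)$. Your treatment of $o_T$ is in fact more explicit than the paper's, which simply asserts the shuffle formula; your observation that in the central chamber every internal edge is directed away from leaf~$1$ (so $b_T$ is the $\leq_{\bfx}$-minimum and the $I(T_s)$ are mutually incomparable) is exactly the justification the paper leaves implicit.
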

\begin{proof}
Suppose we are given $(T, l, c) \in \RRT(m)$ with decomposition type $(k, j, \lambda)$, giving rise to the ordered tuple $(T_1, \ldots, T_j) \in \RRT_\lambda$. Then one has
\[
\sum_{i = 1}^{j} N_{T_i} = N_T - k, \]
while
\[a_T = \prod_{i = 1}^{j} a_{T_i}. \]
 Moreover, we have that 
\[o_T = \frac{(m - 1)!}{\prod_{i = 1}^{m - 1}(i!)^{\lambda_i}} \cdot \prod_{i = 1}^{j} o_{T_i}, \]
since given orderings of $I(T_i)$, we may freely combine them to get an ordering of $I(T)$, and the multinomial coefficient counts the number of ways to do this. Altogether we have
\[a_T o_T t^{N_T} = \frac{(m - 1)!}{\prod_{i = 1}^{m - 1}(i!)^{\lambda_i}} \left(\prod_{i = 1}^{j} a_{T_i} o_{T_i} t^{N_{T_i}}\right) \cdot \frac{\chi(\calM_{0,k + j + 1})}{(k + j)!} t^{k};  \]
the factor of $\chi(\calM_{0,k + j + 1})/{(k + j)!}$ accounts for the base vertex $b_T$.

We now sum over all trees with decomposition type $(k, j, \lambda)$. This amounts to summing over all ways of completing the diagram in Figure \ref{madlib} by making $k$ of the white vertices into leaves, and attaching by their roots an ordered tuple $(T_1, \ldots, T_j) \in \RRT_\lambda$ to the remaining $j$ vertices. The topmost vertex is understood to be the root, and the ribbon structure at the central vertex is determined by the given embedding in the plane. 
\begin{figure}[h]
    \centering
    \includegraphics[scale = 1.25]{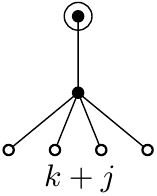}
    \caption{A template for a rooted ribbon tree}
    \label{madlib}
\end{figure}
Proceeding as such, we see that
\begin{align*}
    \nu_m(t) &= \sum_{k, j} \frac{\chi(\calM_{0,k + j + 1})}{(k+j)!} t^{k} \cdot \binom{k + j}{k} \sum_{\substack{{\lambda \vdash (m - 1)}\\\ell(\lambda) = j}} \frac{(m-1)!}{\prod_{i = 1}^{m - 1} (i!)^{\lambda_i}} \sum_{(T_1, \ldots, T_j) \in {\RRT_\lambda}} \prod_{i = 1}^{j} a_{T_i} o_{T_i} t^{N_{T_i}} \\&= \sum_{k, j} \frac{\chi(\calM_{0,k + j + 1})}{k!} t^{k} \cdot \frac{1}{j!} \sum_{\substack{{\lambda \vdash (m - 1)}\\\ell(\lambda) = j}} \frac{(m-1)!}{\prod_{i = 1}^{m - 1} (i!)^{\lambda_i}} \sum_{(T_1, \ldots, T_j) \in {\RRT_\lambda}} \prod_{i = 1}^{j} a_{T_i} o_{T_i} t^{N_{T_i}},
\end{align*}
where the binomial coefficient $\binom{k+j}{k}$ accounts for the choice of $k$ white vertices to turn into leaves. Due to the stability condition and the fact that $m \geq 2$, we only have ribbon trees of decomposition type $(k, j, \lambda)$ when $k \geq 0$, $j \geq 1$, and $k + j \geq 2$. When $j = 1$, we have
\[ \sum_{k \geq 1} \frac{\chi(\calM_{0,k + j + 1})}{k!} t^{k} =  \sum_{k \geq 1} \frac{\chi(\calM_{0,k + 2})}{k!} t^{k} = \nu_1'(t), \]
and for $j \geq 2$, we have 
\[ \sum_{k \geq 0} \frac{\chi(\calM_{0,k + j + 1})}{k!} t^{k} = \nu_1^{(j)}(t), \]
which completes the proof.
\end{proof}

We are now ready to prove Proposition \ref{Bell}.
\begin{proof}[Proof of Proposition \ref{Bell}]
We claim that for any partition $\lambda \vdash (m - 1)$ with $\ell(\lambda) = j$, we have
\begin{equation}\label{CompareCoeffs}
\frac{1}{j!} \sum_{(T_1, \ldots, T_j) \in {\RRT_\lambda}} \prod_{i = 1}^{j} a_{T_i} o_{T_i} t^{N_{T_i}} = \prod_{i = 1}^{m - 1} \frac{\nu_i(t)^{\lambda_i}}{\lambda_i!}.
\end{equation}
Indeed, fix a multiset of stable ribbon rooted trees $S = \{T_1, \ldots, T_j\}$ such that $S$ contains $\lambda_i$ trees with $i$ internal vertices for $1 \leq i \leq m - 1$; we will calculate the coefficient of $\prod_{i = 1}^{j} a_{T_i} o_{T_i} t^{N_{T_i}}$ on both sides. Suppose that, removing duplicates from $S$, we end up with the set $\{J_1, \ldots, J_r\}$, and the tree $J_i$ occurs $k_i$ times in $S$, so $\sum_{i = 1}^{r} k_i = j$. Then there are $ j! / (k_1!\cdots k_r!)$ ways to order $S$, so the term $\prod_{i = 1}^{j} a_{T_i} o_{T_i} t^{N_{T_i}}$ occurs with coefficient $1/{(k_1!\cdots k_r!)}$ on the left-hand side. Towards calculating on the right-hand side, suppose that the $\lambda_i$ elements of $S$ with $i$ internal vertices are chosen from a set $L_i = \{J_{i1}, \ldots, J_{ir_{\lambda_i}} \} \subset \RRT(i)$ of trees such that $J_{is}$ appears $d_{is}$ times. Then the term $\prod_{i = 1}^{j} a_{T_i} o_{T_i} t^{N_{T_i}}$ occurs on the right-hand side with coefficient
\[\frac{1}{\prod_{i = 1}^{m-1} \lambda_i!} \prod_{i = 1}^{m - 1} \frac{\lambda_i!}{d_{i1}!\cdots d_{ir_{\lambda_i}}!} = \frac{1}{\prod_{i = 1}^{m - 1} \prod_{c = 1}^{r_{\lambda_i}} d_{ic}!}.  \]
By the way we defined the integers $k_i$ and $d_{ic}$, we have an equality of multisets
\[\{k_i \mid 1 \leq i \leq r\} = \{d_{ic} \mid 1 \leq i \leq m-1, 1 \leq c \leq r_{\lambda_i} \},  \]
so (\ref{CompareCoeffs}) indeed holds. Proposition \ref{Bell} follows upon combining this with Lemma \ref{CuttingTheCake} and recalling the formula (\ref{BellDefn}) for the Bell polynomials.
\end{proof}

\section{Further directions}\label{NewHorizons}
The preceding results suggest many further questions about the topology of $\Mbar(\bfx)$, and we record some of these here.
\subsection{Wall-crossing} Theorems \ref{Recursion} and \ref{LocallyConstant} give two out of three steps in a potential program to determine the Euler characteristic of $\Mbar(\bfx)$ for arbitrary choice of $\bfx$. The third step in this program would be a \textit{wall-crossing formula}:  a tractable formula for the difference
\[ [\Mbar(\bfx)] - [\Mbar(\bfy)] \in K_0(\mathsf{Var}/\C) \]
when $\bfx$ and $\bfy$ are chosen from adjacent resonance chambers, say separated by the wall $W_{S}$ for some $S \subset [n]$. Given $\T \in \Gamma_{0,n}$ and $e \in E(\T)$, say that $e$ is an \textbf{\textit{$(S, S^c)$-split}} if upon deleting $e$ from $T$, one connected component supports the leaves indexed by $S$, while the other supports the leaves indexed by $S^c$. From Proposition \ref{TreeStrataProp}, it follows that
\begin{align*}
&[\Mbar(\bfx)] - [\Mbar(\bfy)] = \sum_{\substack{{\T \in \Gamma_{0,n}} \\{\T \text{ has an }(S, S^c)-\text{split}}}} [\calM_{\bfx}(\T)] - [\calM_{\bfy}(\T)] \\&= \sum_{\substack{{\T \in \Gamma_{0,n}} \\{\T \text{ has an }(S, S^c)-\text{split}}}} \prod_{v \in I(\T)} [\calM_{0, \val(v)}] \left(\sum_{\mathscr{P} \in \calP_{\bfx}(\T)}[\C^*]^{I(\T) - \ell(\mathscr{P}) + 2} - \sum_{\mathscr{P}' \in \calP_{\bfy}(\T)}[\C^*]^{I(\T) - \ell(\mathscr{P}') + 2} \right),
\end{align*}
but we do not see how to use this description to derive a useful formula. One may hope that the difference $[\Mbar(\bfx)] - [\Mbar(\bfy)]$ is expressed in terms of classes $[\Mbar(\bfx')]$ as $\bfx'$ varies over a set of smaller ramification data, as is the case for Hurwitz numbers ~\cite{SSV, ChamberStructure}. Indeed one has
\[\sum_{\substack{{\T \in \Gamma_{0,n}} \\{\T \text{ has an }(S, S^c)-\text{split}}}} \prod_{v \in I(\T)} [\calM_{0, \val(v)}] = [\Mbar_{0, |S| + 1}] [\Mbar_{0, |S^c| + 1}], \]
but it is unclear how to control the terms indexed by $\calP_{\bfx}(\T)$ and $\calP_{\bfy}(\T)$.

\subsection{Equivariant Euler characteristics}
The space $\Mbar_n$ admits a natural action of $S_n$ by permuting the last $n$ marked points on the source curve, and it is natural to ask for the \textit{equivariant} Euler characteristic
\[\chi^{S_n}(\Mbar_n) \defeq \sum_{i} (-1)^{i} \ch_n H^i(\Mbar_n; \Q) \in \Lambda. \]
Here
\[\Lambda \defeq \lim_{\longleftarrow} \Q[x_1, \ldots, x_n]^{S_n} \]
is the ring of symmetric functions, and for an $S_n$-representation $W$ with a decomposition
\[W = \bigoplus_{\lambda \vdash n} V_\lambda^{\oplus a_\lambda} \]
into irreducibles, we define
\[\ch_n W \defeq \sum_{\lambda \vdash n} a_{\lambda} s_{\lambda}, \]
where $s_{\lambda} \in \Lambda$ is the Schur function corresponding to the partition $\lambda$. A recursive algorithm for the calculation of $\chi^{S_n}(\Mbar_{0,n+1})$ is due to Getzler ~\cite{Getzler}, and uses the theory of operads (indeed, his techniques also give the $S_n$-equivariant Po{\'i}ncare polynomials). Our formula for the numerical Euler characteristic $\chi(\Mbar_n)$ relies on Proposition \ref{TreeStrataProp}, which is manifestly non-equivariant. It is intriguing to ask whether there is a natural operadic framework in which to consider the spaces $\Mbar_{n}$, which would allow for their $S_n$-equivariant study. 

\subsection{Asymptotic behavior of \texorpdfstring{$\chi(\Mbar_{n})$}{chi}} Manin was the first to study the generating function for the numbers $\chi(\Mbar_{0,n+1})$ as a sum over all stable trees in ~\cite{Manin}. He states the asymptotic formula
\[\chi(\Mbar_{0,n+1}) \sim \frac{1}{\sqrt{n}}\left(\frac{n}{e^2 - 2e} \right)^{n - \frac{1}{2}}. \]
A rigorous proof of this formula has been given by Readdy ~\cite{Readdy}. It would be interesting to see whether her techniques may be combined with Theorem \ref{Recursion} to derive an asymptotic for $\chi(\Mbar_{n})$. As Table \ref{EulerChars} indicates, it seems that $\chi(\Mbar_{n})$ grows much faster than $\chi(\Mbar_{0, n + 1})$. We thus make the following conjecture.
\begin{conj}\label{complexity}
We have 
\[\lim_{n \to \infty} \frac{\chi(\Mbar_{0,n+1})}{\chi(\Mbar_n)} = 0. \]
\end{conj}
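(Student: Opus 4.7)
The plan is to compare the exponential growth rates of $\chi(\Mbar_n)$ and $\chi(\Mbar_{0,n+1})$ via singularity analysis of their exponential generating functions. Readdy's asymptotic $\chi(\Mbar_{0,n+1}) \sim \frac{1}{\sqrt{n}}\bigl(\frac{n}{e^2 - 2e}\bigr)^{n - 1/2}$ furnishes the benchmark, so it suffices to show that $\chi(\Mbar_n)$ has strictly larger exponential growth rate. Table \ref{EulerChars} strongly supports this: the ratio appears to grow roughly by a factor of $15$--$20$ per unit increase in $n$ in the displayed range.

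The first step would be to derive a closed-form functional equation for the ordinary-in-$m$ generating function $\Phi(t) \defeq \sum_{n \geq 2} \chi(\Mbar_n)\, t^n/n! = \sum_{m \geq 1} \nu_m(t)$. Proposition \ref{Bell}, together with the classical identity $\sum_{k \geq j} B_{k,j}(a_1, a_2, \ldots)\, u^k/k! = (1/j!)\bigl(\sum_{i \geq 1} a_i u^i/i!\bigr)^j$, gives a recursion for each $\nu_m(t)$; summing over $m$ and rearranging should, if the formal sums converge, yield a relation $\Phi = R(t, \Phi, \nu_1)$ involving the explicit $\nu_1(t) = (1+t)\log(1+t) - t$ and its derivatives. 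Given such a relation, the Flajolet--Sedgewick singularity analysis, extending Readdy's method for $\nu_1$, would identify the dominant singularity $t_*$ of $\Phi$ and hence pin down the exponential growth rate $1/t_*$ of $\chi(\Mbar_n)$; any $t_*$ for which $1/t_*$ exceeds the rate implicit in Readdy's formula would prove the conjecture.

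The main obstacle is executing this first step. The PDE of Theorem \ref{Recursion} governs the exponential-in-$s$ generating function $\Psi(s,t) = \sum_m \nu_m(t)\, s^m/m!$, and obtaining $\Phi(t) = \sum_m \nu_m(t)$ from $\Psi$ is effectively a Borel--Laplace transform in the $s$-direction, which typically destroys the closed form. A fall-back strategy is to work directly with the tree sum
\[ \chi(\Mbar_n) = \sum_{\T \in \Gamma_{0,n+1}} o_\T \prod_{v \in I(\T)} \chi(\calM_{0,\val(v)}), \]
which follows from Proposition \ref{TreeStrataProp} together with $\chi(\C^*) = 0$ (so that only ordered partitions $\scrP$ with $\ell(\scrP) = |I(\T)| + 2$ survive). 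Since $\chi(\calM_{0,3}) = 1$, restricting the sum to trivalent $\T$ produces a positive subsum equal to the number of plane binary trees on $n+1$ labelled leaves equipped with an increasing labelling of the $n - 1$ internal vertices; standard enumeration of such increasing trees gives growth of order $n! \cdot c^n$, more than enough to beat Readdy's asymptotic. The remaining subtlety is controlling the sign-alternating contributions from the non-trivalent strata so that the positive trivalent contribution is not cancelled; one natural approach is to group non-trivalent trees into contractions of trivalent ones and show that each such contraction introduces an $o_\T$-weight too small in absolute value to overcome the positive part.
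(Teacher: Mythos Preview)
The statement you are addressing is Conjecture~\ref{complexity}, which the paper explicitly leaves open: no proof is given, only the numerical evidence of Table~\ref{EulerChars} and the heuristic that the iterated blow-up $\Mbar_n \to \Mbar_{0,n+1}$ should become increasingly complex. So there is no ``paper's own proof'' to compare your attempt against; what you have written is a research plan, and should be evaluated as such.

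Your first approach correctly isolates the difficulty. The PDE of Theorem~\ref{Recursion} controls $\Psi(s,t)=\sum_m \nu_m(t)\,s^m/m!$, whereas you need $\Phi(t)=\sum_m \nu_m(t)$; passing from one to the other is an inverse Borel step in $s$, and there is no reason to expect a closed functional equation for $\Phi$ to survive. You flag this yourself, so this branch is honestly presented as speculative.

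Your fall-back has two genuine gaps. First, a small one: the trees in $\Gamma_{0,n+1}$ are abstract labelled trees, not plane trees, so the trivalent subsum is \emph{not} literally ``the number of plane binary trees on $n+1$ labelled leaves with an increasing labelling of the internal vertices''; rather it is $\sum_{\T\text{ trivalent}} o_\T$, the count of pairs (leaf-labelled trivalent tree, linear extension of the internal-vertex poset rooted at the base). You should write down and justify an explicit growth rate for this quantity before claiming it ``beats Readdy's asymptotic''. Second, and more seriously, the cancellation problem is not merely a subtlety but the entire content of the conjecture. For $n=4$ the trivalent subsum is $18$ while $\chi(\Mbar_4)=10$, so the alternating contributions already erase nearly half the positive part; your proposed remedy (``group non-trivalent trees into contractions of trivalent ones and show each contraction contributes an $o_\T$-weight too small in absolute value'') is not a mechanism, since a contraction with a single $4$-valent vertex carries $A_4=-1$ and an $o_\T$ comparable to that of the trivalent trees it comes from. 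Without a concrete inequality bounding $\bigl|\sum_{\T\text{ non-trivalent}} o_\T A_\T\bigr|$ by a constant times the trivalent subsum, this line does not constitute a proof.
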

It is discussed in \cite{CMR} that $\Mbar(\bfx)$ may be obtained as an iterated blow-up of $\Mbar_{0,n}$. The results in ~\cite{MaximalContacts} suggest that in the case $\Mbar(\bfx) = \Mbar_n$, this sequence of blow-ups can be given an explicit algorithmic description. Conjecture \ref{complexity} suggests that the complexity of this iterated blow-up increases dramatically with $n$.

\subsection{Chern numbers}
The Euler characteristic of $\Mbar(\bfx)$ is an example of a Chern number, as it is equal to the degree of the zero-dimensional part of the Chern-Schwartz-MacPherson class:
\[\chi(\Mbar(\bfx)) = \mathrm{deg}\,c_0^{\mathrm{SM}}(\Mbar(\bfx)). \]
Theorem \ref{LocallyConstant} implies that this number is constant on the resonance chambers. It is interesting to ask whether the same is true for general Chern numbers, i.e. the numbers
\[ \mathrm{deg}\,h(c_0^{\mathrm{SM}}(\Mbar({\bfx})), \ldots, c_{n - 3}^{\mathrm{SM}}(\Mbar({\bfx}))) \in \Q \]
where $h(t_0, \ldots, t_{n - 3})$ is a homogeneous polynomial of degree $n - 3$, when $t_i$ is understood to have degree $ n - 3 - i$. For example, when $h = t_1t_{n-4}$, the corresponding Chern numbers can be expressed in terms of the Hodge numbers ~\cite{LibgoberWood}, so these numbers are also constant in the resonance chambers by Theorem \ref{LocallyConstant}. Motivated by these cases and the piecewise polynomiality of Hurwitz numbers ~\cite{GJV, SSV, ChamberStructure}, we conclude with the following conjecture.
\begin{conj}\label{ChernConjecture}
Fix $h(t_0, \ldots, t_{n-3})$ a homogeneous polynomial of degree $n-3$, where $t_i$ has degree $n - 3 - i$. Then, the function $p : A_n \cap \Z^n \to \Q$ defined by
\[p(\bfx) = \mathrm{deg}\,h(c_0^{\mathrm{SM}}(\Mbar({\bfx})), \ldots, c_{n - 3}^{\mathrm{SM}}(\Mbar({\bfx}))) \]
is piecewise polynomial. Moreover, the chambers of polynomiality are given by the resonance chambers.
\end{conj}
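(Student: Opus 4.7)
The plan is to strengthen Theorem \ref{LocallyConstant} from constancy of the class in $K_0(\mathsf{Var}/\C)$ to constancy of each individual Chern-Schwartz-MacPherson class of $\Mbar(\bfx)$ on a resonance chamber; this already yields piecewise polynomiality with polynomials of degree zero. The natural framework is the toroidal pair $\calM(\bfx) \hookrightarrow \Mbar(\bfx)$ studied in \cite{CMR}, whose dual cone complex is a subdivision of $\Delta_{0,n}$.

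First I would combine Lemma \ref{Orderings} with Lemma \ref{DirectingLocallyConstant} to show that the cones of this complex, together with their incidences, are indexed by pairs $(\T, \scrP)$ with $\scrP \in \calP_{\bfx}(\T)$, and are therefore literally the same for all $\bfx$ in a fixed chamber. Next I would apply a combinatorial formula expressing $c_i^{\mathrm{SM}}(\Mbar(\bfx))$ as a sum of closed stratum classes indexed by the cones --- analogous to the toric formula $c^{\mathrm{SM}}(X) = \sum [V(\sigma)]$ --- so that all intersection numbers of CSM classes, and hence all the Chern numbers $p(\bfx)$, are determined by the chamber alone.

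To upgrade from ``chambers of polynomiality contain the resonance chambers'' to ``are exactly the resonance chambers,'' one must show that for each wall $W_S$ some Chern number genuinely jumps. Here I would refine the $K_0$-wall-crossing description sketched in Section \ref{NewHorizons} to a CSM-level statement, restricting the sum to trees having an $(S, S^c)$-split and tracking how the toroidal subdivision along such an edge changes across the wall. Exhibiting a single nonzero jump, perhaps by an explicit computation for small $n$ such as $n = 4$ or $n = 5$, would suffice.

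The main obstacle I expect is the toroidal CSM formula above: no off-the-shelf statement directly applies to $\Mbar(\bfx)$, both because of the stacky factors $B\mu_{w(v)}$ appearing in (\ref{StrataFactoring}) and because the coarse space has finite quotient singularities. One would need either to develop an orbifold toroidal CSM formula on the smooth DM stack and descend to the coarse space (using the standard relation between Chern numbers of a smooth DM stack and of its coarse moduli), or to bypass toroidal geometry by using the iterated-blow-up description of $\Mbar(\bfx) \to \Mbar_{0,n}$ discussed at the end of Section \ref{NewHorizons} together with Aluffi's blow-up formula for CSM classes, checking in either approach that the relevant combinatorial input is constant on chambers by the argument above.
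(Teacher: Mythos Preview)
The statement you are attempting to prove is Conjecture~\ref{ChernConjecture}, which the paper explicitly leaves open; it appears in the closing section on further directions and is motivated by Theorem~\ref{LocallyConstant} and the piecewise polynomiality of Hurwitz numbers, but no proof is given or claimed. There is therefore no proof in the paper to compare your proposal against.

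Evaluated on its own terms, your outline is a plausible strategy, and you have correctly identified the principal obstruction yourself: there is no established CSM formula for toroidal embeddings that applies directly to $\Mbar(\bfx)$, because of the $B\mu_{w(v)}$ factors in~(\ref{StrataFactoring}) and the quotient singularities of the coarse space. Until such a formula is proved (or the blow-up route via Aluffi's formula is carried out in detail), the argument remains a program rather than a proof. Note also that if your approach succeeds it would show every Chern number is \emph{constant} on each resonance chamber, which is strictly stronger than the piecewise polynomiality the conjecture requests; this is consistent with the Euler-characteristic case but may or may not be what the author expects in general, given the Hurwitz-number analogy. Finally, the wall-crossing step---showing that for \emph{every} wall $W_S$ some Chern number genuinely jumps---is the least developed part of your plan; the paper itself remarks in Section~\ref{NewHorizons} that it does not see how to control the wall-crossing even at the level of $K_0(\mathsf{Var}/\C)$, so an explicit small-$n$ computation would at best verify a few cases rather than establish the claim for all walls.
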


\bibliographystyle{alpha}
\bibliography{biblio}
\end{document}